\documentclass{article}

\usepackage{amsfonts,amsmath,amsthm}
\usepackage{amssymb,latexsym,epsfig,color}
\usepackage{enumerate}
\usepackage[all,cmtip]{xy}
\usepackage{cite}
\usepackage[margin={2cm, 2cm, 2cm, 2cm}]{geometry}

\newcommand{\N}{{\mathbb N}}
\newcommand{\R}{{\mathbb R}}

\newcommand{\A}{{\mathcal A}}

\newcommand{\Set}{{\mathcal S}}

\newcommand{\C}{{\mathcal C}}

\newcommand{\NN}{{\mathcal N}}
\newcommand{\PP}{{\mathcal P}}
\newcommand{\TT}{{\mathcal T}}
\newcommand{\G}{{\mathcal G}}
\newcommand{\U}{{\mathcal U}}
\newcommand{\D}{{\mathcal D}}
\newcommand{\II}{{\mathcal I}}
\newcommand{\F}{{\mathcal F}}
\newcommand{\HH}{{\mathcal H}}
\newcommand{\px}{{2^{\PP(X)}}}

\newcommand{\upar}{{\uparrow}}
\newcommand{\dona}{{\downarrow}}
\theoremstyle{definition}
\newtheorem{theorem}{Theorem}[section]

\newtheorem{corollary}[theorem]{Corollary}
\newtheorem{definition}[theorem]{Definition}

\newtheorem{lemma}[theorem]{Lemma}
\newtheorem{example}[theorem]{Example}

\newtheorem{remark}[theorem]{Remark}











\def\({\langle}

\def\){\rangle}

\begin{document}

\title{Topologies as points within a Stone space:  lattice theory meets topology.}
\author{
Jorge L. Bruno$^a$
\\ \texttt{brujo.email@gmail.com}\\
\and
Aisling E. McCluskey$^{b,}$\footnote{Corresponding author.}  \\ \texttt{aisling.mccluskey@NUIgalway.ie}\\
} 

\date{$a$,$b$: National University of Ireland, Galway\\
School of Mathematics, NUI Galway, Ireland \\
{\bf{t}}: 00353 91 493162; {\bf{f}}: 00353 91 494542.
}

\maketitle

\begin{abstract}
For a non-empty set $X$, the collection $Top(X)$ of all topologies on $X$ sits inside the Boolean lattice $\PP(\PP(X))$ (when ordered by set-theoretic inclusion) which in turn can be naturally identified with the Stone space $\px$. Via this identification then, $Top(X)$ naturally inherits the subspace topology from $\px$. Extending ideas of Frink (1942), we apply lattice-theoretic methods to establish an equivalence between the topological closures of sublattices of $\px$ and their (completely distributive) completions. We exploit this equivalence when searching for countably infinite compact subsets within $Top(X)$ and in crystalizing the Borel complexity of $Top(X)$. We exhibit infinite compact subsets of $Top(X)$ including, in particular, copies of the Stone-\v{C}ech and one-point compactifications of discrete spaces.

\end{abstract}
\hspace{.8cm} 2010 Mathematics Subject Classification: Primary  06B23, 54H10; Secondary 06E15.

\maketitle

\section{Introduction}

For a non-empty set $X$, the collection $Top(X)$ of all topologies on $X$ sits inside the Boolean lattice $\PP(\PP(X))$ (when ordered by set-theoretic inclusion) which in turn can be naturally identified with the Stone space $\px$. Via this identification then, $Top(X)$ naturally inherits the subspace topology from $\px$ (see \cite{TopX1}), a subspace about which little is known. Frink  \cite{MR0006496} showed that endowing the lattice $\PP(\PP(X))$ with either the \textit{interval} or the \textit{order} topology yields the same space as $\px$. In the same paper, Frink also proved that a lattice is complete if and only if it is compact in its interval topology.

These ideas enable us to apply lattice-theoretic techniques in the investigation of an object, whose individual elements provide a rich source of topological inquiry and knowledge. Just as the Stone-\v{C}ech compactification $\beta{\mathbb{N}}$ for discrete space $\mathbb{N}$ is homeomorphic to the subspace of all ultrafilters on $\mathbb{N}$, so the question of how all topologies on a fixed infinite set $X$ behave collectively as a natural subspace of $\px$ is interesting and yet is little explored.  We establish that complete sublattices of $\PP(\PP(X))$ provide a rich supply of compact subsets within $\px$. It is then possible to find infinite compact subsets of $Top(X)$ by purely  lattice-theoretic means and to gain further insight into the topological complexity of $Top(X)$.

The first section of this paper focuses on extending the aforementioned results by describing the equivalence between the topological closures of sublattices of $\px$ and their (completely distributive) completions. We exploit such an equivalence when searching for countably infinite compact subsets within $Top(X)$ and in crystalizing the Borel complexity of $Top(X)$. The last section is devoted to describing other infinite compact subsets of $Top(X)$ including, in particular, copies of the Stone-\v{C}ech and one-point compactifications of discrete spaces.

\section{Preliminaries}
For convenience and unless otherwise indicated, $\px$ shall denote the usual Boolean algebra for an infinite set $X$ equipped with the subset inclusion order ($\subseteq$) in addition to the usual product space, where $2$ is the discrete space. We reserve the use of the symbol $\subset$ for cases of proper or strict containment only. The topology of any subset $P$ of $\px$ then is simply the usual subspace topology on $P$ (which we shall denote where necessary by $P_\Pi$) while $\overline{P}$ will denote the topological closure of $P$ in the space $\px$. \\
For a sublattice $P$ of $\px$, we denote by $\hat{P}$ its lattice-theoretic completion. Thus $\hat{P} = \bigcap \{L \subseteq \px \mid P \subseteq L$, $L$ a sublattice of $\px$ and $L = \hat{L}\}$. Of course, finite sublattices are trivially complete and, in general, the \textit{free completion} of a lattice does not exist (see \cite{MR0108447}, \cite{MR0003614}, \cite{MR0006143}).  That said, the \textit{completely distributive completion} of any partial order exists and is unique up to isomorphism \cite{MR2163422}. Moreover, for a partial order $P$:
 $$x \in \hat{P} \mbox{ if and only if } x = \bigwedge \bigvee S \mbox{ for all } S \subseteq P \mbox{ so that }x \leq \bigvee S.$$

Since $\px$ (as well as any sublattice thereof) is completely distributive we have an explicit description of each element in the lattice-theoretic completion of any sublattice of $\px$.

\begin{definition} Let $(P, \leq)$ be a poset with $p \in P$. We define $p^{\downarrow} = \{ x \in P \mid x \leq p\}$, $p^{\uparrow}= \{ x \in P \mid p \leq x\}$, $p_{\downarrow} =p^{\downarrow}\smallsetminus \{p\}$ and $p_{\uparrow}= p^{\uparrow}\smallsetminus \{p\}$.

\end{definition}
\begin{definition} We shall adopt the following notation:
\begin{itemize}
\item[(i)]If $P$ is a lattice, and $S \subseteq P$, then we denote by $<S>_L$ the sublattice of $P$ generated by closing off $S$  under finite meets and finite joins.
\item[(ii)] If $\mathcal{S} \subseteq \PP(X)$, then $<\mathcal{S}>_T$ denotes the topology $\sigma$ on $X$ generated by closing off $\mathcal S$ under finite intersections and arbitrary unions. Thus $\mathcal{S} \cup \{\emptyset, X\}$ is a subbase for $\sigma$.
\end{itemize}
\end{definition}
\begin{definition} Given any $S \subseteq 2^{\PP(X)}$ we let $R_S = \{ a \in 2^{\PP(X)} \mid \forall b \in S,$ either $b \subseteq a$ or $a \subseteq b\}$ and refer to it as the \textit{set of relations of} $S$.

\end{definition}

\section{Completeness and compactness of sublattices}

\begin{lemma} Let $P$ be a sublattice of $\px$, let $S \subseteq P$ and let $x = \bigvee S$. If $x \not \in P$, then $x$ is a limit point of $P$.
\end{lemma}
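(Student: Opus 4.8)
The plan is to prove the (formally stronger) statement that $x \in \overline{P}$; since $x \notin P$ by hypothesis, any point of $P$ that lies in a given neighbourhood of $x$ is automatically distinct from $x$, so $x \in \overline{P}$ is precisely the assertion that $x$ is a limit point of $P$. It therefore suffices to verify that every basic open neighbourhood of $x$ meets $P$. First I would unwind the topology on $\px = 2^{\PP(X)}$ (with coordinates indexed by the members $A \in \PP(X)$): a subbasic open set either demands $A \in a$ or demands $A \notin a$ for a single fixed $A$, so a basic open neighbourhood $U$ of $x$ is specified by two finite families $\{A_1,\dots,A_n\} \subseteq x$ and $\{B_1,\dots,B_m\} \subseteq \PP(X) \smallsetminus x$, namely $U = \{a \in \px \mid A_i \in a \text{ for all } i \text{ and } B_j \notin a \text{ for all } j\}$. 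That the $A_i$ genuinely belong to $x$ and the $B_j$ genuinely lie outside $x$ is exactly the requirement that $U$ be a neighbourhood of $x$.

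The key step is to construct a witness in $P \cap U$. Recall that the join in the Boolean lattice $\px$ is union, so $x = \bigvee S = \bigcup_{s \in S} s$. For each positive constraint $A_i$, the membership $A_i \in x = \bigcup S$ furnishes some $s_i \in S \subseteq P$ with $A_i \in s_i$. Setting $p = \bigvee_{i=1}^{n} s_i = s_1 \cup \cdots \cup s_n$, closure of the sublattice $P$ under finite joins gives $p \in P$. Now $A_i \in s_i \subseteq p$ settles all the positive constraints, while $p = \bigcup_i s_i \subseteq \bigcup S = x$ forces $B_j \notin p$ for every $j$ (because $B_j \notin x$), settling all the negative constraints. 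Hence $p \in P \cap U$, and as $U$ was an arbitrary basic neighbourhood, $x \in \overline{P}$, as required. Note $p \neq x$ is automatic, since $p \in P$ while $x \notin P$.

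The only delicate point, and the one I expect to need explicit care, is the degenerate neighbourhood with no positive constraints ($n = 0$): there the displayed join is empty and need not lie in the sublattice $P$. I would dispose of this by taking any single element $s \in S$: since $s \subseteq x$, it already satisfies every negative constraint, so $s \in P \cap U$. This covers the case provided $S \neq \emptyset$, which is the intended reading of the statement (for $S = \emptyset$ one has $x = \bigvee \emptyset$ the least element, and the claim can genuinely fail — e.g. for the one-point sublattice $\{\PP(X)\}$ — so the empty case must be excluded). Apart from this bookkeeping I anticipate no serious obstacle: the entire force of the argument is that finite joins of members of $S$ remain inside $P$ by the sublattice hypothesis yet stay below $x$, so they approximate $x$ from within $P$ on any prescribed finite set of coordinates.
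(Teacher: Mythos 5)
Your proof is correct and follows essentially the same route as the paper's: for each positive constraint $A_i$ pick a witness $s_i \in S$ with $A_i \in s_i$, observe that negative constraints hold automatically since each $s_i$ (and hence $\bigvee_i s_i$) lies below $x$, and use closure of the sublattice $P$ under finite joins to land $\bigvee_i s_i$ in $P$ inside the given neighbourhood, distinct from $x$ because $x \notin P$. Your extra attention to the degenerate cases ($n = 0$ and $S = \emptyset$) is a minor but legitimate refinement that the paper's proof silently glosses over.
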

\begin{proof}
Let $\bigcap A_i^+ \cap \bigcap B^-_j$ be an arbitrary open neighbourhood of $x$. Then for each of the finitely many $i$, there is $ s_i \in S$ such that $A_i \in s_i$; furthermore $B_j \not \in s_i$ for each $j$ and each $i$. Thus $\bigvee_i s_i \in \bigcap A_i^+ \cap \bigcap B^-_j \cap P$ and clearly $\bigvee_i s_i \not = x$.

\end{proof}

Recall that the \textit{interval topology} on a poset $P$ is the one generated by $\{x^{\upar} \mid x \in P\} \cup \{x^{\dona} \mid x\in P\} \cup \{P, \emptyset\}$ as a subbase for the closed sets; we denote it by $P_<$. The \textit{order topology} $P_O$ on a lattice $P$ is defined in terms of Moore-Smith convergence. A filter $\F$  of subsets from $P$ is said to Moore-Smith-converge to a point $l \in P$ whenever

$$\bigwedge_{F \in \F} \bigvee F = l = \bigvee_{F \in \F} \bigwedge F.$$

We then take $F \subseteq P$ to be closed if and only if any convergent filter that contains $F$ converges to a point in $F$. For a lattice $P$, $P_< \subseteq P_O$ \cite{MR0006496}.

\begin{lemma} \label{lem:threetop} Let $P$ be a sublattice of $\px$. Then $P_< \subseteq P_{\Pi} \subseteq P_O$ and all three topologies coincide when $P$ is a complete sublattice of $\px$. Moreover, all three topologies on $P$ are compact if and only if $P$ is complete.

\end{lemma}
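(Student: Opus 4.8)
The plan is to treat the two inclusions separately and then derive both the coincidence and the compactness characterisation from them, together with Frink's theorem that a lattice is complete precisely when it is compact in its interval topology.

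First I would establish $P_< \subseteq P_\Pi$. For $x \in P$ the interval‑subbasic closed sets are $x^{\uparrow} = P \cap \bigcap_{A \in x} A^{+}$ and $x^{\downarrow} = P \cap \bigcap_{A \notin x} A^{-}$, each an intersection of clopen subbasic sets of $\px$ and hence $\Pi$‑closed; thus every $P_<$‑closed set is $\Pi$‑closed. For $P_\Pi \subseteq P_O$ I would show that an order‑convergent filter is $\Pi$‑convergent to the same limit: if $\bigwedge_{F} \bigvee F = l = \bigvee_{F} \bigwedge F$, then $A \in l$ forces the clopen set $A^{+}$ to contain a member of the filter while $A \notin l$ forces $A^{-}$ to contain one, so the filter lies eventually inside every basic $\Pi$‑neighbourhood of $l$; consequently every $\Pi$‑closed set is order‑closed. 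This is consistent with the inclusion $P_< \subseteq P_O$ already recorded for lattices.

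For the coincidence when $P$ is complete --- that is, closed under arbitrary intersections and unions computed in $\px$ --- the decisive observation is that the product subbasis becomes interval‑clopen. For each $A \in \PP(X)$ set $m_A = \bigcap\{p \in P : A \in p\}$ and $M_A = \bigcup\{p \in P : A \notin p\}$, both of which lie in $P$ by completeness. One checks $A \in m_A$ and $A \notin M_A$, whence $A^{+} \cap P = m_A^{\uparrow}$ and $A^{-} \cap P = M_A^{\downarrow}$. Thus each subbasic $\Pi$‑set is simultaneously a principal up‑ or down‑set and the complement of the other, so it is $P_<$‑clopen; this gives $P_\Pi \subseteq P_<$ and hence $P_< = P_\Pi$. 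Combined with $P_\Pi \subseteq P_O$ and the reverse inclusion $P_O \subseteq P_\Pi$ --- for complete $P$ every $\Pi$‑convergent filter is order‑convergent, since the requisite suprema and infima now exist in $P$ and are realised as unions and intersections --- all three topologies coincide. The compactness characterisation then follows: if $P$ is complete, $P_<$ is compact by Frink's theorem and the three topologies agree, so all three are compact (one may also confirm directly, via $m_A, M_A$ or the preceding lemma on suprema as limit points, that $P$ is $\Pi$‑closed in the compact Hausdorff space $\px$); conversely, since $P_< \subseteq P_\Pi \subseteq P_O$, compactness of any one passes to the coarser $P_<$ (a topology coarser than a compact one is compact), and Frink's theorem returns completeness of $P$.

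I expect the main obstacle to be the order‑topology half of the argument: fixing the precise sense in which $P$ is ``complete'' and reconciling the lattice operations $\bigwedge, \bigvee$ with the Stone‑space membership relations $A^{+}, A^{-}$. Everything turns on the identities $A^{+} \cap P = m_A^{\uparrow}$ and $A^{-} \cap P = M_A^{\downarrow}$, which require closure under arbitrary unions and intersections; without that hypothesis the subbasis need not be interval‑open and $P_<$ can be strictly coarser than $P_\Pi$, so this is exactly where the completeness assumption must do its work.
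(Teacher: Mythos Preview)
Your proposal is correct and follows essentially the same route as the paper: the identities $x^{\uparrow}\cap P=\bigcap_{A\in x}(A^{+}\cap P)$ for the first inclusion, the filter argument for the second, the elements $m_A=\bigcap(A^{+}\cap P)$ and $M_A=\bigcup(A^{-}\cap P)$ to collapse $P_\Pi$ onto $P_<$ in the complete case, and Frink's theorem for the compactness equivalence all appear in the paper's proof. The one point of divergence is how you obtain $P_O\subseteq P_\Pi$ when $P$ is complete: you argue directly that $\Pi$-convergence forces order-convergence (because the relevant suprema and infima are now realised as unions and intersections in $P$), whereas the paper instead observes that $P_<$ is Hausdorff and invokes a cited result to conclude that $P_O$ is compact Hausdorff, whence $P_O=P_<$ by the standard compact-Hausdorff rigidity argument. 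Your version is more self-contained; the paper's is shorter but leans on an external reference.
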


\begin{proof} The first inequality is true since for any $x \in P$, we have that

$$x^{\upar} \cap P = \bigcap_{A \in x} (A^+ \cap P).$$

A similar argument holds for $x^{\dona} \cap P$. For the second inequality and without loss of generality, take any subbasic closed set $A^+$ and let $\F$ be a convergent filter in $P$ containing $A^+ \cap P$; this forces $\bigwedge (A^+ \cap P)$ to exist in $P$. Since $P$ is a sublattice of $\px$ then

$$\bigvee_{F \in \F} \bigwedge F = \bigcup_{F \in \F} \bigcap F$$

and $A \in \bigcup\limits_{F \in \F} \bigcap F$. Consequently, $ \bigcup\limits_{F \in \F} \bigcap F \in A^+ \cap P$.

Next, if $P$ is complete
then for any $A \subseteq X$, it follows that $\bigcap (A^+ \cap P) \in P$ and $\bigcup (A^- \cap P) \in P$. In turn, $(A^+ \cap P) = (\bigcap (A^+ \cap P))^{\upar}$, $(A^- \cap P) = (\bigcup (A^- \cap P))^{\dona}$ and $P_< =  P_{\Pi}$.
 Since $P_<$  is $T_2$ then $P_O$ must be compact Hausdorff \cite{MR0167439} and $P_O = P_<$. The last claim is true since Frink \cite{MR0006496} shows that a complete lattice is compact in its interval topology if and only if it is complete.

\end{proof}

Not only is a sublattice $P$ compact in $\px$ precisely when $P$ is complete but also the closure of $P$ within $\px$ is indeed its lattice theoretic completion:

\begin{theorem} \label{thm:limpoint} Given an infinite sublattice $P$ of $\px$ and $x \in \px$,
 \begin{itemize}

      \item[(i)]$x$ is a limit point of $P$ only if $x$ can be expressed in the form $\bigwedge_{j \in J} \bigvee_{i \in I_j} x_{i,j}$, where $x \not = x_{i,j}$ for each $ i,j$  and $\{x_{i,j} \mid i \in I_j\}_{j \in J}$ are infinite subsets of $P$.
          \item[(ii)] ${\overline{P}}= \hat{P}$; that is, the topological closure of $P$ in $\px$ coincides with its lattice-theoretic completion.

    \end{itemize}

\end{theorem}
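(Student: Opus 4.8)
The plan is to prove the two parts together, using (i) to feed the first inclusion of (ii). Throughout I identify a point $a \in \px$ with the subset $\{A \in \PP(X) \mid A \in a\}$, so that the join and meet of a family are its union and intersection, and the subbasic neighbourhoods of a point $x$ are the sets $A^+$ with $A \in x$ and $B^-$ with $B \notin x$. The guiding principle is that the completely distributive completion criterion recalled in the Preliminaries, namely $x \in \hat{P}$ iff $x = \bigwedge\{\bigvee S \mid S \subseteq P,\ x \leq \bigvee S\}$, translates in this Stone space into a purely combinatorial statement about which $A \in \PP(X)$ can be separated from $x$ by unions of members of $P$.

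For (i) I would first record that $\px$ is Hausdorff (a product of copies of discrete $2$), so a limit point $x$ of $P$ meets \emph{every} one of its neighbourhoods in infinitely many points of $P$. The representation is then assembled one ``missing coordinate'' at a time. For each $B \in \PP(X)\smallsetminus x$ and each $A \in x$, the clopen set $A^+ \cap B^-$ is a neighbourhood of $x$, hence contains a point $p_{A,B} \in P$ with $p_{A,B}\neq x$; by construction $A \in p_{A,B}$ and $B \notin p_{A,B}$. Letting $S_B$ consist of all such $p_{A,B}$ together with infinitely many further points of $P$ drawn from a single neighbourhood $A_0^+ \cap B^-$ (to force $S_B$ infinite), one checks that $\bigcup S_B \supseteq x$ while $B \notin \bigcup S_B$. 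Indexing by $J = \PP(X)\smallsetminus x$ then yields $x = \bigwedge_{B \in J}\bigvee S_B$: the inclusion $\supseteq$ holds since each $\bigcup S_B$ contains $x$, and $\subseteq$ holds since any $C\notin x$ is excluded from $\bigcup S_C$. Every generator differs from $x$ and every $S_B$ is infinite, which is precisely the asserted form; the degenerate cases $x=\emptyset$ and $x=\PP(X)$ are disposed of directly.

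For the inclusion $\overline{P}\subseteq \hat{P}$ in (ii), note that $P \subseteq \hat{P}$ trivially, while a limit point $x$, written by (i) as $x=\bigwedge_{j}\bigvee S_j$ with $S_j \subseteq P$ and $x \leq \bigvee S_j$ for all $j$, meets the completion criterion: the defining meet $M=\bigwedge\{\bigvee S \mid S\subseteq P,\ x\leq \bigvee S\}$ satisfies $M \leq \bigwedge_j \bigvee S_j = x$ (our $S_j$ occur among the $S$) and $M \geq x$ (every $\bigvee S$ dominates $x$), whence $M=x$ and $x \in \hat{P}$.

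The reverse inclusion $\hat{P}\subseteq \overline{P}$ is where the real work lies, and I expect it to be the main obstacle. Given $x \in \hat{P}\smallsetminus P$, I must show that every basic neighbourhood $\bigcap_i A_i^+ \cap \bigcap_j B_j^-$ of $x$ (finitely many $A_i \in x$ and $B_j \notin x$) meets $P$. The key extraction from the criterion is that, since $B_j \notin x$ equals the meet of all the relevant unions, some witness $S^{(j)}\subseteq P$ has $x \leq \bigvee S^{(j)}$ yet $B_j \notin \bigvee S^{(j)}$. The delicate point is that a \emph{single} element of $P$ must simultaneously contain all the $A_i$ and avoid all the $B_j$, and no individual $S^{(j)}$ achieves this. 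I would resolve it by a two-stage use of the fact that $P$ is closed under \emph{finite} meets and joins: for each $i,j$ pick $q_{i,j}\in S^{(j)}$ with $A_i \in q_{i,j}$ (possible as $A_i \in x \leq \bigvee S^{(j)}$), so automatically $B_j \notin q_{i,j}$; set $r_i = \bigwedge_j q_{i,j} \in P$, which still contains $A_i$ and now avoids every $B_j$ since $r_i \subseteq q_{i,j}$; finally set $p=\bigvee_i r_i \in P$, which contains every $A_i$ and avoids every $B_j$. Then $p$ lies in the neighbourhood and in $P$, and as $x\notin P$ we have $p\neq x$, so $x$ is a limit point and $x\in\overline{P}$. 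The finiteness of the two index sets is exactly what keeps $r_i$ and $p$ inside the sublattice, and the easy cases with no $A_i$ or no $B_j$ reduce to one half of this construction.
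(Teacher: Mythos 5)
Your proposal is correct and follows essentially the same route as the paper: part (i) is assembled from witnesses in the clopen sets $A^+ \cap B^-$ exactly as the paper does (it takes $S_B = \bigcup_{A \in x}(A^+ \cap B^- \cap P)$, using Hausdorffness of $\px$ for infiniteness), and for $\hat{P} \subseteq \overline{P}$ you extract witness families $S^{(j)}$ from the completely distributive criterion and combine finitely many of their elements using the sublattice operations, which is precisely the paper's argument. The only immaterial differences are that you take meets over $j$ first and then joins over $i$ (the paper does the dual order), and that your uniform argument absorbs the case $x = \bigvee S$, for which the paper instead invokes Lemma 3.1.
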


\begin{proof}
(i) Let $x$ be a limit point of $P$ and assume, without loss of generality, that $x$ is infinite. If $x = \PP(X)$ then since all neighbourhoods of $x$  meet $P$, we must have $x = \bigvee (x_{\dona} \cap P)$ and we are done. Otherwise fix any $B \not \in x$ and notice that $\forall A \in x$ we have that $A^+ \cap B^- \cap P \not = \emptyset$. Furthermore $|A^+ \cap B^- \cap P| \geq \aleph_0$ since otherwise  we can find (in Hausdorff $\px$) a neighbourhood of $x$ which is disjoint from $P$.  Holding $B$ fixed, it is simple to see that $x \subseteq \bigvee\limits_{A\in x}\left( \bigvee (A^+ \cap B^- \cap P)\right)$ while $B \not \in \bigvee\limits_{A\in x}\left( \bigvee (A^+ \cap B^- \cap P)\right)$. To this end, we must only intersect all such suprema for each $B \not \in x$ and we have the required form. In symbols:

$$x = \bigwedge_{B \not \in x} \bigvee_{A \in x}\left(\bigvee (A^+ \cap B^- \cap P)\right).$$

For (ii) we must only notice that (i) $\Rightarrow$ (ii). Indeed, take any $x \in \hat{P}$. If $x \in P$, we are done. Otherwise, if $x = \bigvee S$, for $S \subseteq P$ then Lemma 3.1 applies. The last possibility is for $x = \bigwedge\limits_{k \in K} \bigvee S_{k}$ where $S_{k} \subseteq P$ and $x \subset \bigvee S_{k}$. Take a basic open set $\bigcap A_i^+ \cap \bigcap B^-_j$ about $x$ and observe that for all $i$ and for all $k \in K$, $A^+_i \cap S_{k} \not = \emptyset$.  As for the $B_j^-$, we know that for any $j$ we can find a $k_j$ for which $B_j \not \in \bigvee S_{k_j}$. Hence, for each $j$ take a finite collection of elements from its corresponding $<S_{k_j}>_L$ (i.e. the one for which $B_j \not \in \bigvee S_{k_j}$) so that the join of such a collection contains all $A_i$. Taking the intersection of all such collections for each $j$ we have an element of $P$ that is contained in the aforementioned basic open set and thus $x$ is a limit point of $P$.


\end{proof}

Thus for example,  a chain $\Omega$ in $\px$ is by default a sublattice of $\px$ and so its closure $\overline{\Omega}$ in $\px$ is its lattice-theoretic completion, which is again a chain. In fact, observe that $$\hat{\Omega} (= \overline{\Omega}) = \left\{\bigcap \Omega \right\} \cup \left\{\bigcup \Omega \right\} \cup \Omega \cup \left\{ \bigcup (b_{\downarrow} \cap \Omega) \mid b \in R_\Omega \right\} \cup \left\{ \bigcap (b_{\uparrow} \cap \Omega) \mid b \in R_\Omega \right\}.$$

 \begin{remark} Let $(X, \sigma)$ be any topological space containing a convergent sequence $(x_n)_{n\in\omega}$ where $x_n \rightarrow x_{\omega}$. That $x_n\rightarrow x_{\omega}$ is equivalent to demanding that any open set containing $x_{\omega}$ must contain all but finitely many points from $(x_n)$. Notice that the same is true for $\omega$ in the ordinal space $\omega + 1$ (with the order topology) . Moreover, any natural number in $\omega +1$ is isolated and hence $\omega$ is a discrete subspace of $\omega + 1$. Thus $\omega+1$, as an indexing set for any convergent sequence with its limit $\{x_n, x_{\omega}\}$, sets up a natural and continuous mapping $\phi: \omega + 1 \rightarrow \{x_n, x_{\omega}\}_{n \in \omega}$ (where $n \rightarrow x_n$) whereupon compactness naturally transfers. With that in mind, let $\Omega = \{a_1, a_2, \ldots\}$ be a well-ordered chain in $\px$ with $\alpha$ as its indexing ordinal.
  If $\beta \in \alpha$ is a limit ordinal, then any open set about $\beta$ contains infinitely many ordinals below $\beta$. Notice that this might not be the case with $a_{\beta}$, for if $a_{\beta} \not = \bigcup (a_{\beta})_{\dona}$ then $a_{\beta}$ can be separated from $(a_{\beta})_{\dona}$ by means of open sets. Thus, the bijection $h:\alpha \rightarrow \Omega$ for which $\beta \mapsto a_{\beta}$ is clearly open: $h$ is a homeomorphism if and only if for any limit ordinal $\beta \in \alpha$ we have $a_{\beta} = \bigcup (a_{\beta})_{\downarrow}$.

\end{remark}

\section{For $X$ infinite, $Top(X)$ is neither a $G_\delta$ nor an $F_\sigma$ set}

\begin{lemma} \label{lem:interopen} For any $\{A_i \mid i \in \omega \} \subseteq \mathcal{P}(X)$, $\bigcap_{i \in \omega} A_i^+ $ contains a sublattice of $\mathcal{P}(X)$ that is not join complete; that is, $\left(\bigcap_{i \in \omega} A_i^+ \right) \cap \left( LatB(X) \smallsetminus Top(X) \right) \not = \emptyset$.

\end{lemma}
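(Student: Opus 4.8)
The plan is to exhibit a single point of $\bigcap_{i\in\omega}A_i^+$ that is a bounded sublattice of $\PP(X)$ yet fails to be closed under arbitrary unions. Recalling that $A_i^+=\{a\in\px\mid A_i\in a\}$, a point $\LL\in\px$ lies in $\bigcap_{i} A_i^+$ exactly when $A_i\in\LL$ for every $i$. First I would form the sublattice generated by the $A_i$ together with \emph{every} singleton and the two bounds, namely $\LL=<\{A_i\mid i\in\omega\}\cup\{\{x\}\mid x\in X\}\cup\{\emptyset,X\}>_L$, closing off under finite meets and finite joins only. By construction $\LL$ is a sublattice of $\PP(X)$ containing $\emptyset$ and $X$, so $\LL\in LatB(X)$, and each $A_i\in\LL$, so $\LL\in\bigcap_{i} A_i^+$.

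It then remains to check that $\LL$ is not join complete, i.e. $\LL\notin Top(X)$, and here I would argue by cardinality rather than by producing an explicit missing union. Since the generating set has cardinality $|X|$ and closing an infinite set under the finitary lattice operations does not increase its cardinality, $|\LL|=|X|$. As $|\PP(X)|=2^{|X|}>|X|$ by Cantor's theorem, there is some $S\subseteq X$ with $S\notin\LL$. But every singleton belongs to $\LL$, so $S=\bigcup_{x\in S}\{x\}$ is an arbitrary union of elements of $\LL$ that does not lie in $\LL$; hence $\LL$ is a bounded sublattice that is not closed under arbitrary unions, and therefore is not a topology. This places $\LL$ in $\left(\bigcap_{i} A_i^+\right)\cap\left(LatB(X)\smallsetminus Top(X)\right)$, as required.

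The point I expect to need the most care is precisely this non-completeness step, because the $A_i$ are arbitrary and could conspire to make a naively generated lattice complete: the sublattice generated by the $A_i$ alone may well be a topology (for instance a finite lattice, or the chain of tails $\{[n,\infty)\mid n\in\omega\}$ on $X=\omega$, both of which are closed under arbitrary unions), so simply generating from $\{A_i\}$ and hoping for incompleteness is not safe. Adjoining all singletons is what defeats this: it makes every subset of $X$ a join of elements of $\LL$, while $\LL$ itself stays too small --- of size $|X|$ --- to exhaust $\PP(X)$, so the failure of join completeness becomes automatic and uniform in the choice of the $A_i$. An alternative, more hands-on route would split on whether the Boolean algebra generated by the $A_i$ has infinitely many atoms or a single infinite atom and then build an increasing sequence whose union escapes the lattice, but the cardinality argument is cleaner and avoids having to verify that such a union is not already produced by finite combinations of the $A_i$.
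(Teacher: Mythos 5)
Your proof is correct and takes essentially the same route as the paper: the paper likewise adjoins a collection of singletons to the lattice generated by $\{A_i \mid i \in \omega\}$ and derives failure of join completeness from a Cantor-style cardinality count. The only cosmetic difference is that the paper adjoins just countably many singletons (keeping the generated lattice countable and comparing with the $2^{\aleph_0}$ subsets of their union), whereas you adjoin all singletons and compare $|X|$ with $2^{|X|}$.
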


\begin{proof} Consider $<\{A_{i} \mid i \in \omega \}>_L$ and suppose that it is join complete (otherwise, we are done). Notice that its countable cardinality demands that only finitely many of the $A_{i}$s can be singletons. Since $X$ is infinite, we may choose a countable infinite collection of singletons $\Set = \{\{p\} \mid p \in X\}$ from $\PP(X)$ and generate a lattice $K= <\{A_{i}\} \cup \Set>_L$. Then $K$ cannot be join complete for there are uncountably many subsets of $\cup \Set$ (i.e. joins of $\Set$) and only $\aleph_0$ many elements in $K$.

\end{proof}

\begin{theorem} $Top(X)$ is not a $G_{\delta}$ set.
\end{theorem}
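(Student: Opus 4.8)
The plan is to argue by contradiction: suppose $Top(X) = \bigcap_{n \in \omega} U_n$ with each $U_n$ open in $\px$, and manufacture a single point of $\bigcap_n U_n$ that fails to be a topology. The crucial idea is to run the argument at the \emph{discrete topology} $\PP(X)$, which is the top element of $\px$. Since $\PP(X)$ contains every subset of $X$, no subbasic set of the form $B^-$ can contain it; hence every basic open neighbourhood of $\PP(X)$ is of the purely positive form $\bigcap_{A \in F} A^+$ for some finite $F \subseteq \PP(X)$. This is precisely what lets Lemma~\ref{lem:interopen} be brought to bear, since that lemma speaks only about \emph{countable intersections of positive subbasic sets}.

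Concretely, I would first note that $\PP(X) \in Top(X) = \bigcap_n U_n$, so $\PP(X) \in U_n$ for every $n$. For each $n$, using openness of $U_n$ I would select a basic neighbourhood of $\PP(X)$ sitting inside $U_n$; by the observation above this neighbourhood must be of the form $\bigcap_{A \in F_n} A^+$ for some finite $F_n \subseteq \PP(X)$. Setting $\A = \bigcup_{n} F_n$, which is a countable collection of subsets of $X$, I would then obtain
$$\bigcap_{A \in \A} A^+ = \bigcap_{n} \left(\bigcap_{A \in F_n} A^+\right) \subseteq \bigcap_n U_n = Top(X).$$

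Next I would invoke Lemma~\ref{lem:interopen} on the countable family $\A$: it furnishes a point $L \in \left(\bigcap_{A \in \A} A^+\right) \cap \left(LatB(X) \smallsetminus Top(X)\right)$, that is, a bounded sublattice of $\PP(X)$ that is not join complete and so is not a topology. But the displayed inclusion forces $L \in Top(X)$, a contradiction. Hence no such sequence $(U_n)$ can exist and $Top(X)$ is not a $G_\delta$ set.

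I do not expect a genuine obstacle, as the substantive work is already front-loaded into Lemma~\ref{lem:interopen}. The only points demanding care are the choice of base point and a small cardinality bookkeeping: the whole argument hinges on $\PP(X)$ being the top element, so that its neighbourhoods carry no negative constraints $B^-$ to obstruct the lemma, and on the fact that a countable union of the finite index sets $F_n$ stays countable, so that the lemma applies verbatim. Should $\A$ happen to be finite, one simply repeats entries to present it as an $\omega$-indexed family, which leaves the intersection $\bigcap_{A \in \A} A^+$ unchanged.
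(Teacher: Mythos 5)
Your proof is correct and follows essentially the same route as the paper's: both arguments anchor at the discrete topology, observe that (being the top element of $\px$) its basic neighbourhoods involve only positive constraints $A^+$, extract from the assumed $G_\delta$ representation a countable family of such sets whose intersection lies inside $Top(X)$, and then invoke Lemma~\ref{lem:interopen} to produce a non-join-complete sublattice in that intersection, contradicting that every member of it is a topology. The only cosmetic difference is that you select one basic neighbourhood per open set explicitly and take the union of the finite index families, whereas the paper phrases the same step as a ``renumeration'' of the $A$s.
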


\begin{proof} Suppose that $Top(X) = \bigcap_{k \in \omega} \mathcal{O}_k$, where

$$\mathcal{O}_k = \bigcup_{\alpha \in \beta_k} \left((\bigcap_{i_{\alpha} \leq n_{\alpha}} A^+_{i_{\alpha}}) \cap (\bigcap_{j_{\alpha} \leq m_{\alpha}} B^-_{j_{\alpha}})\right).$$

Now, the discrete topology $\mathcal{D}$ on $X$ must be in this intersection of open sets. Thus for each $k \in \omega$, it must belong to at least one basic open set of the form $(\bigcap_{i_{\alpha} \leq n_{\alpha}} A^+_{i_{\alpha}}) \cap (\bigcap_{j_{\alpha} \leq m_{\alpha}} B^-_{j_{\alpha}})$ and since $\mathcal{D}$ contains all sets, then no subbasic open set can be of the form $B^-$. That is, $\mathcal{D}\in \bigcap_{k \in \omega} A^+_{k}$ after some renumeration of the $A$s. Applying Lemma~\ref{lem:interopen} to $\bigcap_{k \in \omega} A^+_{k}$
we can find a sublattice of $\mathcal{P}(X)$ that belongs to $\bigcap_{k \in \omega} A^+_{k}$ and that is not join complete - a contradiction.

\end{proof}

 In fact, Lemma~\ref{lem:interopen} proves something much stronger. Define recursively:

 \begin{align*}
 G^0_{\delta} &:= \{ \mbox{all } G_{\delta} \mbox{ sets}\} \\
 G_{\delta \sigma}^0 &:= \{ \mbox{all countable unions of } G_{\delta} \mbox{ sets} \}\\
 G_{\delta}^\beta & := \{ \mbox{all countable intersections of } G_{\delta \sigma}^{\beta-1} \mbox{ sets}\} & \mbox{(for $\beta$ a successor ordinal)}\\
 G_{\delta \sigma}^\beta & := \{ \mbox{all countable unions of } G_{\delta}^{\beta} \mbox{ sets}\} & \mbox{(for $\beta$ a successor ordinal)}\\
 G_{\delta}^\gamma &:= \bigcup_{\beta \in \gamma} G_{\delta}^\beta & \mbox{(for $\gamma$ limit ordinal)}\\
G_{\delta \sigma}^\gamma &:= \bigcup_{\beta \in \gamma} G_{\delta \sigma}^\beta & \mbox{(for $\gamma$ limit ordinal)}
 \end{align*}



 Take for any $n \in \N$ a set $G_n$, say, from $G_{\delta}^n$ and assume that $Top(X) = \bigcap_{n \in \omega}  G_n$. Since $\D \in  \bigcap_n G_n$ then for each $n \in \N$ we can find a countable collection $\{A_{i_n}\}_{i \in \omega}$ of subsets of $X$ corresponding to each $G_n$ so that

 $$\D \in \bigcap_{i_n} A^+_{i_n} \subset G_n$$

 and consequently

 $$\D \in \bigcap_{n \in \omega} \bigcap_{i_n} A^+_{i_n} \subset \bigcap_{n \in \omega} G_n.$$

By Lemma~\ref{lem:interopen}, there is a lattice that is not join complete belonging to $ \bigcap_{n \in \omega} \bigcap_{i_n} A^+_{i_n}$. Hence, $Top(X) \not = \bigcap_{n \in \omega}  G_n$. Notice that the same is true for any countable limit ordinal. That is, for any $\beta \in \omega_1$ so that $\D \in G \in G_\delta^\beta$ it is possible to extract a countable collection of open sets $A_i$ ($i \in \omega$) so that $\D \in \bigcap_{i \in \omega} A^+_i \subseteq G$, in which case Lemma~\ref{lem:interopen} completes the proof.

  \begin{corollary} $Top(X) \not \in G_{\delta}^\beta$ for $\beta \in \omega_1$.
 \end{corollary}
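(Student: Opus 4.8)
The plan is to reduce the corollary to a single uniform statement about neighbourhoods of the discrete topology $\D$ and then to prove that statement by transfinite induction on $\beta$. Concretely, I would isolate the following claim: for every $\beta \in \omega_1$ and every $G \in G_\delta^\beta$ with $\D \in G$, there is a \emph{countable} family $\{A_i \mid i \in \omega\} \subseteq \PP(X)$ such that $\D \in \bigcap_{i \in \omega} A_i^+ \subseteq G$. The reason this suffices is exactly Lemma~\ref{lem:interopen}: once $Top(X)$ is assumed to lie in some $G_\delta^\beta$, applying the claim to $G = Top(X)$ produces a set $\bigcap_{i \in \omega} A_i^+ \subseteq Top(X)$, yet Lemma~\ref{lem:interopen} guarantees that this very intersection meets $LatB(X) \smallsetminus Top(X)$, a contradiction. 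The essential structural observation underpinning the claim is that, because $\D$ contains every subset of $X$, it can lie in no subbasic set of the form $B^-$; hence every basic neighbourhood of $\D$ in $\px$ is of the purely positive form $\bigcap_{i \in F} A_i^+$ with $F$ finite. This is precisely the shape of intersection to which Lemma~\ref{lem:interopen} applies.

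For the induction itself I would argue as follows. In the base case $\beta = 0$, a $G_\delta$ set $G = \bigcap_n O_n$ containing $\D$ yields, for each $n$, a basic neighbourhood of $\D$ inside $O_n$; by the observation above each such neighbourhood is a finite positive intersection $\bigcap_{i \in F_n} A_i^+$, and amalgamating the countably many finite index sets gives a single countable family whose full intersection sits inside every $O_n$ and hence inside $G$. For a successor $\beta = \gamma + 1$, write $G = \bigcap_n H_n$ with each $H_n \in G_{\delta\sigma}^\gamma$ a countable union $\bigcup_m K_{n,m}$ of $G_\delta^\gamma$ sets; since $\D \in H_n$ there is, for each $n$, an index $m(n)$ with $\D \in K_{n,m(n)}$, and the induction hypothesis supplies a countable family for each $K_{n,m(n)}$ sitting inside $H_n$. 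Taking the union of these countably many countable families keeps the family countable, and its total intersection lies in each $H_n$, hence in $G$. The limit case $\gamma$ is immediate from $G_\delta^\gamma = \bigcup_{\beta \in \gamma} G_\delta^\beta$, since membership in $G_\delta^\gamma$ already places $G$ in some earlier $G_\delta^\beta$.

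The part requiring the most care is not any single case but the global bookkeeping that keeps the extracted family countable throughout the whole transfinite process: at the successor step one must confirm that a countable union of the countable families furnished by the induction hypothesis remains countable, and that its intersection is simultaneously contained in all of the $H_n$. This is exactly where the restriction to $\beta \in \omega_1$ is used — each stage performs only countable intersections and unions, so the positive family witnessing the neighbourhood of $\D$ never grows beyond $\aleph_0$, and Lemma~\ref{lem:interopen} remains applicable at the end. Once the claim is in hand, the corollary follows by the contradiction sketched above, completing the argument for every countable ordinal $\beta$.
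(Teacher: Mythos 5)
Your proposal is correct and follows essentially the same route as the paper: both reduce the corollary to the claim that any $G \in G_\delta^\beta$ containing $\D$ admits a countable family $\{A_i\}$ with $\D \in \bigcap_{i\in\omega} A_i^+ \subseteq G$ (using that $\D$ lies in no $B^-$, so its basic neighbourhoods are finite positive intersections), and then invoke Lemma~\ref{lem:interopen} for the contradiction. The only difference is presentational: the paper sketches the extraction and asserts it persists through the hierarchy, whereas you organize it as an explicit transfinite induction on $\beta$, which makes the paper's claim precise.
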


In other words, it is not possible to generate (in the sense of Borel sets) $Top(X)$ by means of open sets. If $\px$ was metrizable (which it is not) then the above corollary would suffice to show that $Top(X)$ is not a Borel set.

\begin{corollary} $Top(X)$ is not \v{C}ech complete.
\end{corollary}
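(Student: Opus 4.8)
The plan is to invoke the standard characterization of Čech completeness: a Tychonoff space $Y$ is Čech complete if and only if it is a $G_\delta$ subset in one (equivalently, in every) of its compactifications. Since $\px$ is compact Hausdorff, $Top(X)$ is Tychonoff as a subspace, and its closure $\overline{Top(X)}$ is a compact Hausdorff space in which $Top(X)$ sits densely; that is, $\overline{Top(X)}$ is a compactification of $Top(X)$. It therefore suffices to show that $Top(X)$ is \emph{not} a $G_\delta$ subset of $\overline{Top(X)}$. I would emphasise at the outset that this is genuinely stronger than the preceding theorem, which only asserts that $Top(X)$ is not $G_\delta$ in the whole of $\px$: a subset can fail to be $G_\delta$ in the ambient space yet be $G_\delta$ in its own closure, since $\overline{Top(X)}$ need not itself be $G_\delta$ in $\px$.

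The bridge between the two statements is the observation that the non-join-complete sublattices produced by Lemma~\ref{lem:interopen} already lie in $\overline{Top(X)}$. First I would record that every bounded sublattice belongs to $\overline{Top(X)}$. Indeed, let $\mathcal{L} \in LatB(X)$ and let $\bigcap_{i \le n} A_i^+ \cap \bigcap_{j \le m} B_j^-$ be a basic neighbourhood of $\mathcal{L}$, so that $A_1,\ldots,A_n \in \mathcal{L}$ and $B_1,\ldots,B_m \notin \mathcal{L}$. The sublattice $<\{A_1,\ldots,A_n,\emptyset,X\}>_L$ generated within $\px$ is finite, hence complete, hence closed under arbitrary joins, and so is itself a (finite) topology $\tau$; moreover $\tau \subseteq \mathcal{L}$, whence each $A_i \in \tau$ while no $B_j$ lies in $\tau$. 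Thus every basic neighbourhood of $\mathcal{L}$ meets $Top(X)$, giving $\mathcal{L} \in \overline{Top(X)}$ and so $LatB(X) \subseteq \overline{Top(X)}$.

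With this containment in hand I would argue by contradiction, reusing the template of the previous theorem. Suppose $Top(X)$ were $G_\delta$ in $\overline{Top(X)}$; writing each relatively open set as $\mathcal{O}_k \cap \overline{Top(X)}$ with $\mathcal{O}_k$ open in $\px$, we obtain $Top(X) = \overline{Top(X)} \cap \bigcap_{k} \mathcal{O}_k$. The discrete topology $\D$ lies in each $\mathcal{O}_k$, and since $\D$ contains every subset of $X$ no subbasic factor of the form $B^-$ can occur in a basic neighbourhood of $\D$; hence, after renumbering, $\D \in \bigcap_{k} A_k^+ \subseteq \bigcap_{k} \mathcal{O}_k$ for a countable family $\{A_k\}_{k \in \omega}$. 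By Lemma~\ref{lem:interopen} there is a non-join-complete sublattice $L \in \bigcap_{k} A_k^+ \subseteq \bigcap_{k} \mathcal{O}_k$, and by the previous paragraph $L \in \overline{Top(X)}$. Therefore $L \in \overline{Top(X)} \cap \bigcap_{k} \mathcal{O}_k = Top(X)$, contradicting the fact that $L$ is not a topology.

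The main obstacle is precisely the gap flagged in the first paragraph: Čech completeness concerns being $G_\delta$ in a \emph{compactification} rather than in the ambient Stone space, so the non-$G_\delta$ theorem cannot simply be quoted. All the real work is isolated in the containment $LatB(X) \subseteq \overline{Top(X)}$, which guarantees that the non-topology furnished by Lemma~\ref{lem:interopen} survives inside the compactification $\overline{Top(X)}$. Once that containment is secured, the contradiction runs exactly as in the proof that $Top(X)$ is not $G_\delta$.
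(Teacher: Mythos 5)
Your proof is correct and follows essentially the same route as the paper: both arguments come down to applying Lemma~\ref{lem:interopen} inside a compactification of $Top(X)$, using the discrete topology $\D$ to eliminate all subbasic factors of the form $B^-$. The paper's own proof is terser: it observes that any countable intersection of open sets containing $\D$ meets $LatB(X)\smallsetminus Top(X)$, and concludes that $Top(X)$ is not $G_\delta$ in $LatB(X)$ --- taking for granted that $LatB(X)$ is a compactification of $Top(X)$ (compactness holds because $LatB(X)$ is closed in $\px$, and density of $Top(X)$ in $LatB(X)$ is only asserted later, in Section 5, as an ``of course''). Your proposal supplies exactly this unstated ingredient: the argument that every bounded sublattice is approximated by the finite topologies it contains, i.e.\ $LatB(X)\subseteq\overline{Top(X)}$, which together with closedness of $LatB(X)$ gives $\overline{Top(X)}=LatB(X)$ and legitimizes using either set as the compactification in which $Top(X)$ fails to be $G_\delta$. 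So the two proofs are the same in substance, with yours correctly flagging --- and then closing --- the gap between ``not $G_\delta$ in $\px$'' and ``not $G_\delta$ in a compactification'' on which the paper's one-line conclusion silently rests.
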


\begin{proof} We showed above that any countable intersection of open sets from $2^{\PP(X)}$ containing the discrete topology on $X$ contains an element of $LatB(X)\smallsetminus Top(X)$. Hence, $Top(X)$ is not a $G_{\delta}$ set in $LatB(X)$.

\end{proof}

\begin{theorem} $Top(X)$ is not an $F_{\sigma}$ set.

\end{theorem}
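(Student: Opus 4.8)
The plan is to mirror the preceding $G_\delta$ argument under the evident duality: $Top(X)$ is an $F_\sigma$ subset of $\px$ if and only if its complement is a $G_\delta$ subset, and since the ``bounded sublattice'' conditions are closed, $LatB(X)$ is closed in $\px$; hence it is equivalent to show that $LatB(X)\smallsetminus Top(X)$ is not a $G_\delta$ set in $LatB(X)$. Whereas the non-$G_\delta$ result rested on the discrete topology $\D$ being a limit of non-complete lattices through every $G_\delta$-neighbourhood, here I would single out one non-complete lattice $p^{*}$ that is, dually, a limit of genuine topologies through every $G_\delta$-neighbourhood: every neighbourhood of $p^{*}$ of the form $\bigcap_{i}A_i^{+}\cap\bigcap_{j}B_j^{-}$ determined by countably many subbasic clopen sets should contain a point of $Top(X)$. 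Granting this, if $LatB(X)\smallsetminus Top(X)=\bigcap_{n}\mathcal{O}_n$ with each $\mathcal O_n$ open, then choosing basic neighbourhoods $V_n\ni p^{*}$ with $V_n\subseteq\mathcal O_n$ produces a $G_\delta$ set $\bigcap_n V_n\ni p^{*}$ lying inside the complement, while the dual property forces $\bigcap_n V_n$ to meet $Top(X)$ --- the desired contradiction, exactly parallel to the previous theorem.

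The heart of this approach is a lemma dual to Lemma~\ref{lem:interopen}. A neighbourhood $\bigcap_i A_i^{+}\cap\bigcap_j B_j^{-}$ of $p^{*}$ records countably many memberships $A_i\in p^{*}$ and non-memberships $B_j\notin p^{*}$; a topology realising all of them exists as soon as the topology $<\{A_i\}>_T$ generated by the ``in''-sets avoids every ``out''-set, for which it suffices that $<S>_T\subseteq p^{*}$ for every countable $S\subseteq p^{*}$. I would therefore take $p^{*}$ to be a bounded sublattice of $\PP(X)$ that is closed under finite meets and under \emph{countable} joins but not under arbitrary joins. For $|X|\geq\aleph_1$ the explicit choice $p^{*}=\{A\subseteq X\mid |A|\leq\aleph_0\}\cup\{X\}$ works: finite intersections and countable unions of countable sets remain countable, so $<S>_T\subseteq p^{*}$ for every countable $S\subseteq p^{*}$, while an uncountable proper subset of $X$ is a join of singletons from $p^{*}$ that is absent from $p^{*}$, so $p^{*}$ is not join-complete. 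This yields the dual lemma and settles the case of uncountable $X$.

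The step I expect to be the main obstacle is the countable case $|X|=\aleph_0$, where this witness cannot exist: over a countable set every union is already a countable union, so any sublattice closed under countable joins is join-complete, i.e.\ a topology; indeed every non-topology $p$ admits countably many $U_n\in p$ with $\bigcup_n U_n\notin p$, and the neighbourhood $\bigcap_n U_n^{+}\cap(\bigcup_n U_n)^{-}$ then contains no topology, so the single-point method provably fails. For $X$ countable I would instead prove non-$F_\sigma$-ness through non-$\sigma$-compactness: since $LatB(X)$ is compact, an $F_\sigma$ subset is $\sigma$-compact and hence of countable extent, so it is enough to exhibit an \emph{uncountable closed discrete} subspace of $Top(X)$. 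A natural candidate is the family of included-set topologies $\tau_A=\{\emptyset\}\cup\{U\mid U\supseteq A\}$ indexed by an almost disjoint family $\mathcal A$ of infinite subsets of $\omega$ with $|\mathcal A|=\aleph_1$; almost-disjointness gives $A\not\supseteq A'$ for distinct members, so the single coordinate $A^{+}$ isolates $\tau_A$, making the family discrete. The delicate point --- and the crux of the whole theorem --- is to arrange $\mathcal A$ so that this family is \emph{closed} in $Top(X)$, that is, so that every accumulation point in $LatB(X)$ fails to be join-complete; one must rule out genuine-topology limits such as the indiscrete topology, which amounts to a combinatorial condition on $\mathcal A$.
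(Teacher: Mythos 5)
Your first half is correct and genuinely different from the paper: for $|X|\geq\aleph_1$ the single witness $p^{*}=\{A\subseteq X \mid |A|\leq\aleph_0\}\cup\{X\}$ really does dualize Lemma~\ref{lem:interopen}, since every countably determined neighbourhood $\bigcap_{i\in\omega}A_i^{+}\cap\bigcap_{j\in\omega}B_j^{-}$ of $p^{*}$ contains the topology $<\{A_i \mid i \in \omega\}>_T\ \subseteq p^{*}$, and your deduction that the complement of $Top(X)$ is then not $G_\delta$ is sound. You are also right that this single-point method provably fails when $|X|=\aleph_0$, and your reduction of that case to exhibiting an uncountable closed discrete subspace of $Top(\N)$ is legitimate (an $F_\sigma$ subset of the compact space $\px$ is $\sigma$-compact, hence has countable extent). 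But the theorem is asserted for every infinite $X$, and the countable case --- the very case the paper proves in detail --- is exactly where your argument has a hole, which you acknowledge but do not close.

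That hole is not merely ``delicate''; your candidate family can never work, for any choice of $\mathcal{A}$. Each included-set topology is a filter topology, $\tau_A=\{\emptyset\}\cup\{U \mid U\supseteq A\}$, and since $\px$ is compact Hausdorff, every point of $\overline{\{\tau_A \mid A\in\mathcal{A}\}}$ is a coordinatewise ultrafilter limit $x_{\mathcal{W}}=\{\emptyset,X\}\cup\{U \mid \{A\in\mathcal{A} \mid A\subseteq U\}\in\mathcal{W}\}$ for some ultrafilter $\mathcal{W}$ on $\mathcal{A}$. The family $\{U \mid \{A\in\mathcal{A} \mid A\subseteq U\}\in\mathcal{W}\}$ is always a filter on $\N$: it is upward closed because $\mathcal{W}$ is, and it is closed under finite intersections because $\{A \mid A\subseteq U\}\cap\{A \mid A\subseteq V\}=\{A \mid A\subseteq U\cap V\}\in\mathcal{W}$, which in particular is nonempty and so forces $U\cap V\neq\emptyset$. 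Hence \emph{every} accumulation point of your family is itself a topology, regardless of any combinatorial condition (almost disjointness or otherwise) imposed on $\mathcal{A}$; and since an uncountable subset of compact $\px$ must have accumulation points, your discrete family is never closed in $Top(\N)$. This is the same phenomenon exhibited in Theorem~\ref{thm:fortop}: discrete families of topologies tend to compactify \emph{inside} $Top(X)$ rather than sit closed in it. The paper sidesteps this trap by abandoning discreteness and using a chain: an order-embedding $a\mapsto\tau_a=\PP(k(a))\cup\{\N\}$ of $[0,1]$ with $k$ left-continuous, so that the supremum of any strictly increasing sequence of the $\tau_a$ fails join-completeness; if $Top(\N)=\bigcup_{k\in\omega}\C_k$ with each $\C_k$ closed, some $\C_k$ contains uncountably many $\tau_a$, hence a strictly increasing sequence $S$, and Theorem~\ref{thm:limpoint} then places $\bigcup S\in\overline{S}\subseteq\C_k$ even though $\bigcup S\notin Top(\N)$. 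To finish your proof you would need either such a chain argument for $X=\N$, or an entirely different witness that $Top(\N)$ has uncountable extent --- and the latter cannot come from included-set topologies.
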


\begin{proof} If $Top(X)$ is an $F_{\sigma}$ set, then it must be of the form

$$ Top(X) = \bigcup_{k \in \omega} \C_k$$
where each $\C_k$ is a closed set. We will show by contradiction that at least one such closed set must contain a sequence of topologies whose limit is not a topology. Since the limit of any sequence must be present in the closure of the sequence, then the aforementioned closed set will contain an element that is not a topology. We prove the above for $|X| = \aleph_0$ and note that the same is true for any $X$ with $|X| \geq \aleph_0$.

Let $k : [0,1] \rightarrow \PP(\N)$ be an injective order morphism so that $\forall a \in [0,1]$, $\bigcup_{b < a} k(b) = k(a)$ and $k(1) \not = \N$. That is, $k([0,1])$ is a dense and uncountable linear order in $\PP(\N)$ where $a < b \Rightarrow k(a) \subset k(b)$. Next, for any $a$ define $\tau_a = \PP(k(a)) \cup \{\N\}$. Notice that $\forall a \in [0,1]$, $\tau_a \in Top(\N)$ and $\bigcup_{b < a} \tau_b \not \in Top(\N)$ (since $k(a) \not \in \bigcup_{b < a} \tau_b$),  and $\{\tau_a\}_{a \in [0,1]}$ is an uncountable dense linear order in $Top(\N)$. If $Top(\N)= \bigcup_{k \in \omega} \C_k$ where each $\C_k$ is closed then there must exist one set $\C$ from $\{\C_k\}_{k \in \omega}$ which contains an uncountable set $D \subset \{\tau_a\}_{a \in [0,1]}$ for which $\mu(D) > 0$ (non-zero measure). We immediately get that $D$ must contain a densely ordered subset that in turn contains a strictly increasing sequence, call it $S$. By Theorem~\ref{thm:limpoint}, $ \bigcup S \in \hat{S}$ but $\bigcup S \not \in Top(\N)$. To this end we have $\bigcup S \in \C$, a contradiction.

\end{proof}

\begin{corollary} For $\beta \in \omega_1$, the following are equivalent:

\begin{enumerate}[(a)]
\item $Top(X) \in G_{\delta}^\beta$.
\item $Top(X)$ is a  $G_{\delta}$ set.
\item $Top(X)$ is an $F_{\sigma}$ set.
\item $Top(X)$ is  \v{C}ech complete.
\item $X$ is finite.

\end{enumerate}

\end{corollary}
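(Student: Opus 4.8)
The plan is to prove the equivalence by establishing a cycle of implications, leveraging the theorems already proved in this section. The overall strategy is to show $(a)\Rightarrow(b)$ and $(d)\Rightarrow(b)$ to collapse the upper conditions down to the $G_\delta$ property, then route everything through $(e)$ and back. First I would observe that $(e)\Rightarrow(b)$ is essentially trivial: when $X$ is finite, $\PP(X)$ is finite, so $\px$ is a finite discrete space and $Top(X)$ is simply a finite (hence clopen, hence $G_\delta$) subset. This gives us a clean anchor for the cycle.

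Next I would handle the implications that flow \emph{downward} from the stronger hypotheses. For $(a)\Rightarrow(e)$, I would invoke the Corollary that $Top(X)\notin G_\delta^\beta$ for every $\beta\in\omega_1$ whenever $X$ is infinite; taking the contrapositive, membership in some $G_\delta^\beta$ forces $X$ finite. Since $(b)$ is the case $\beta=0$ of $G_\delta^\beta$, the implication $(b)\Rightarrow(a)$ is immediate, and $(a)\Rightarrow(e)\Rightarrow(b)$ closes the loop between $(a)$, $(b)$, and $(e)$. For the \v{C}ech completeness condition $(d)$, I would recall that a \v{C}ech-complete space, when it sits as a dense subspace inside a compact Hausdorff space such as $\px$, must be a $G_\delta$ in that ambient space; more directly, the Corollary stating that $Top(X)$ is not \v{C}ech complete was proved precisely by exhibiting that $Top(X)$ fails to be $G_\delta$ in its compactification, so $(d)\Rightarrow(b)$ follows from the same mechanism, and hence $(d)\Rightarrow(e)$.

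It then remains to incorporate the $F_\sigma$ condition $(c)$. Here I would use the standalone Theorem that $Top(X)$ is not an $F_\sigma$ set for infinite $X$: its contrapositive gives $(c)\Rightarrow(e)$. For the reverse direction $(e)\Rightarrow(c)$, finiteness again makes $Top(X)$ a finite, hence closed, subset, which is trivially $F_\sigma$. Assembling these pieces, I would present the argument as the single cycle
\[
(a)\Rightarrow(e)\Rightarrow(b)\Rightarrow(a),\qquad (c)\Rightarrow(e)\Rightarrow(c),\qquad (d)\Rightarrow(e)\Rightarrow(d),
\]
so that $(e)$ serves as the hub through which every condition is shown equivalent to finiteness of $X$.

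The routine directions are all the $(e)\Rightarrow\cdot$ arrows, which reduce to the observation that a finite set carries a finite power-set and thus a finite, fully discrete $\px$. The substantive content is entirely borrowed: the hard analytic work lives in the earlier Theorems and Corollaries ruling out $G_\delta^\beta$, $F_\sigma$, and \v{C}ech-complete structure for infinite $X$. The main obstacle I anticipate is not a new proof but rather ensuring the \v{C}ech-completeness implication $(d)\Rightarrow(b)$ is justified cleanly, since it relies on the standard fact that a \v{C}ech-complete subspace of a compact Hausdorff space is $G_\delta$ in that space; I would state this explicitly and cite it, rather than leaving it implicit, so that the equivalence of $(d)$ with the rest does not rest on an unstated topological lemma.
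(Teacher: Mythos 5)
Your overall architecture is exactly the one the paper intends: the corollary is stated without its own proof precisely because it is meant to be read as the assembly you describe, namely collapsing (a)--(d) onto condition (e) via the contrapositives of the section's negative results (Corollary 4.3 for (a), Theorem 4.2 for (b), Theorem 4.5 for (c), Corollary 4.4 for (d)), with every $(e)\Rightarrow\cdot$ arrow trivial because $\px$ is a finite discrete space when $X$ is finite. The arrows $(a)\Rightarrow(e)$ and $(c)\Rightarrow(e)$ are sound as you give them, and $(b)\Rightarrow(a)$ is fine once you add the one-line observation that the hierarchy $G_\delta^0\subseteq G_{\delta\sigma}^0\subseteq G_\delta^1\subseteq\cdots$ is increasing (a set is a countable intersection, and a countable union, of itself), so a $G_\delta$ set lies in $G_\delta^\beta$ for every $\beta\in\omega_1$.

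The genuine flaw is in your treatment of (d). The ``standard fact'' you propose to state and cite --- that a \v{C}ech-complete subspace of a compact Hausdorff space is $G_\delta$ in that ambient space --- is false without a density hypothesis: \v{C}ech completeness of $Y$ is equivalent to $Y$ being $G_\delta$ in some (equivalently, every) \emph{compactification} of $Y$, i.e.\ in a compact Hausdorff space in which $Y$ is \emph{dense}. A counterexample sits inside the very space at hand: for infinite $X$ the product $\px$ has uncountable weight, so no singleton is a $G_\delta$ in it, yet every singleton is compact and hence \v{C}ech-complete. Moreover $Top(X)$ is not dense in $\px$ (the nonempty open set $\emptyset^-$ misses every topology), so the fact could not be applied with ambient space $\px$ even if it were repaired by adding density. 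Consequently your direct derivation of $(d)\Rightarrow(b)$ does not go through: \v{C}ech completeness of $Top(X)$ would only yield that $Top(X)$ is $G_\delta$ in its closure $LatB(X)$, not in $\px$. That, however, is exactly the contradiction the paper's Corollary 4.4 exploits --- its proof shows $Top(X)$ fails to be $G_\delta$ in $LatB(X)$ whenever $X$ is infinite --- so the repair is simply the ``more direct'' route you mention in passing: obtain $(d)\Rightarrow(e)$ as the contrapositive of Corollary 4.4, and reach (b) from (d) only through (e), never directly.
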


\section{Compact infinite subsets of $Top(X)$}

In this section, we provide examples of compact infinite subsets of $Top(X)$. Note in particular that any countable chain of topologies must converge to its union which may not itself be a topology. For example, consider the nested sequence of finite topologies $\{\tau_i \mid \tau_i = \PP(\{x_0, x_1. \ldots, x_i\}) \cup \{X, \emptyset\}\}$, where $\{x_0, x_1, \ldots\}$ is a countable infinite subset of $X$. Then $\tau_n \rightarrow \bigcup \tau_i$ but notice that $\bigcup \tau_i$ fails to be a topology as $\{x_0, x_1, \ldots\}$ does not belong to any $\tau_i$. Of course, $LatB(X)$, as a compactification of $Top(X)$, will contain all such limits.

\begin{example} For simplicity, take a countable infinite subset $C$ of $X$. Enumerate $C = \{a_0, a_1, a_2, \ldots \}$ and create a sequence  in $\PP(C)$ as follows: $C_0 = \{a_0\}$, $C_1 = \{a_0, a_1\}$, ..., $C_{\omega} = C$. Now, for any $n \in \omega$ let $\tau_n = \{C_m \mid m\leq n\} \cup \{X, \emptyset\}$ and $\tau_{\omega} = \{C_m \mid m \in \omega\} \cup \{C, X, \emptyset\}$; then $(\tau_n)_{n \in \omega}$ converges to (non-Hausdorff) $\tau_{\omega}$ in $Top(X)$. Indeed, let $B = \bigcap A^+_i \cap \bigcap B^-_j$ be a basic open set containing $\tau_{\omega}$. Then no $B_j = C_m$ for any $m \in \omega$ and any $A_i$ must be either $\emptyset$, $X$, $C$ or a $C_n$, for some $n \in \omega$. Since there are only finitely many $A_i$ then there exists an $m \in \omega$ for which all $A_i \in \tau_m$.

\end{example}

In view of the above example,  we can construct a convergent sequence of compact non-Hausdorff topologies, whose limit is both compact and Hausdorff.
\begin{example} Let $[a,b] \subset \R$, and define any strictly increasing sequence $\{x_n \mid a < x_n < b\}_{n \in \omega}$ whose limit is $b$. Next, let $\NN_b = \{(c,b] \mid a \leq c \leq b\}$, $\NN_a(x) = \{ [a,c) \mid c \leq x\}$ and

\begin{align*}
\tau_0 &=  <\{[a,b)\} \cup \NN_b \cup \NN_a(x_0)  \cup \{\emptyset\}>_T\\
\tau_1 &=  <\{[a,b)\}\cup \NN_b \cup \NN_a(x_1) \cup \{\emptyset\}>_T\\
\vdots \\
\tau_{\omega} &= \bigcup_{i \in \omega} \tau_i
\end{align*}

By design, $\tau_n\ \rightarrow \tau_{\omega}$. Observe also that $\tau_{\omega}$ is the usual Euclidean topology on $[a,b]$ and so is compact and Hausdorff. Indeed, given any $c \in [a,b]$ we must only check that $(c,b] \in \tau_{\omega}$. Since $x_n\rightarrow b$ then there exists a $k \in \omega$ so that $x_k > c$, hence $(c,b] \in \tau_k$. To show that each $\tau_n$ ($n \in \omega$) is compact, take an open cover $\mathcal{C}$ of $[a,b]$ from $\tau_k$ ($k\in \omega$). Notice that since $a$ must be covered then $[a,c_1) \in \C$ for some $c_1 \leq x_k$. Since $x_k$ must also be covered by some element in $\C$ then, for some $c_2 \leq x_k$, either $(c_2,b)$ or $(c_2,b]$ belong to $\C$. If $c_1 > c_2$ then we're done. Otherwise, notice that $\tau_k \upharpoonright [c_1,c_2]$ is the usual topology on $\R$ restricted to $[c_1,c_2]$ (which yields a compact space). Finally, no $\tau_n$ is Hausdorff (since $b$ can't be separated from all points in $[a,b]$).
\end{example}

The above example confirms that the collection of compact non-$T_2$ topologies on a set $X$ fails to be closed given the existence of a countable chain of compact non-$T_2$ topologies whose union (and topological limit)is a $T_2$ and compact topology.

Even though any compact topology is contained in a maximal compact topology \cite{kovar:DSP:2005:118}
it is possible to construct strictly increasing sequences of compact topologies whose limits are not compact. Consider the half-open half-closed interval $[a,b)$ equipped with the (convergent) sequence of topologies $\tau_i$ as in the previous Example, with $\NN_b = \{(c,b) \mid a \leq c\}$ modified accordingly. It is clear that every topology, with the exception of $\tau_{\omega}$, is compact.

Nested sequences are not the only type of compact infinite subsets of $Top(X)$. Recall that an \emph{atom} in $Top(X)$ is a topology of the form $\{\emptyset, A, X\}$ where $A$ is a nonempty and proper subset of $X$. Consider then the following theorem where $\II$ denotes the trivial topology on $X$.

\begin{theorem} Let $\TT$ be an infinite collection of atoms in $Top(X)$. Then $\overline{\TT} = \TT \cup \{\II\} $.

\end{theorem}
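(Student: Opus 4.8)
The plan is to work directly with the subbasic clopen sets of $\px$, namely $A^+ = \{a \in \px \mid A \in a\}$ and $A^- = \{a \in \px \mid A \notin a\}$, rather than appealing to the completion machinery of Theorem~\ref{thm:limpoint}, since $\TT$ is merely an antichain and not a sublattice. Write each atom as $\tau_A = \{\emptyset, A, X\}$ and set $\TT = \{\tau_A \mid A \in \A\}$, where $\A$ is an infinite family of nonempty proper subsets of $X$. The proof splits into the two inclusions $\TT \cup \{\II\} \subseteq \overline{\TT}$ and $\overline{\TT} \subseteq \TT \cup \{\II\}$.

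For the first inclusion, $\TT \subseteq \overline{\TT}$ is immediate, so the only content is showing $\II = \{\emptyset, X\} \in \overline{\TT}$. I would take an arbitrary basic open neighbourhood $\bigcap_i A_i^+ \cap \bigcap_j B_j^-$ of $\II$ and read off the constraints imposed by $\II$ lying in it: every $A_i \in \{\emptyset, X\}$ and every $B_j$ is a nonempty proper subset. Since $\A$ is infinite while there are only finitely many $B_j$, I can choose $A \in \A$ with $A \neq B_j$ for all $j$; then $\tau_A$ contains $\emptyset$ and $X$ (so it meets every $A_i^+$) and omits each $B_j$, placing $\tau_A$ inside the neighbourhood. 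Hence every neighbourhood of $\II$ meets $\TT$. One may note in passing that $\II = \bigwedge \TT$, so this is the meet-dual analogue of Lemma~3.1.

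For the reverse inclusion, I would take $x \in \overline{\TT}$ and extract three structural facts, each by exhibiting a separating neighbourhood whenever the fact fails. First, $\emptyset \in x$ and $X \in x$: otherwise $\emptyset^-$ (respectively $X^-$) is a neighbourhood of $x$ meeting no atom, since every $\tau_A$ contains $\emptyset$ and $X$. Second, $x$ contains at most one nonempty proper subset: if $B \neq B'$ were two such sets in $x$, then $B^+ \cap B'^+$ is a neighbourhood of $x$, yet an atom $\tau_A$ lies in it only when $A = B$ and $A = B'$ simultaneously, which is impossible, so this neighbourhood misses $\TT$ entirely. These two facts force $x$ to be either $\{\emptyset, X\} = \II$ or $\{\emptyset, B, X\} = \tau_B$ for a single nonempty proper $B$. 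In the latter, atomic, case the neighbourhood $B^+$ meets $\TT$ only in $\tau_B$, so $x \in \overline{\TT}$ compels $\tau_B \in \TT$. Combining the cases yields $x \in \TT \cup \{\II\}$.

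The argument is essentially routine once the subbasic neighbourhoods are brought in; the only step requiring care, and the closest thing to an obstacle, is the observation that a point carrying two distinct nonempty proper subsets is automatically separated from the \emph{entire} antichain of atoms at once. This is precisely what rules out all spurious limit points and pins the unique accumulation point down to $\II$.
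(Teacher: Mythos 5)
Your proof is correct and is essentially the paper's argument in dual form: the separating neighbourhoods you exhibit ($\emptyset^-$, $X^-$, $B^+\cap B'^+$, $B^+$) are exactly the complements of the subbasic closed sets that the paper intersects to form its closed set $\C = X^+ \cap \emptyset^+ \cap \bigcap_D D^- \cap \bigcap_{B,C}(B^-\cup C^-) = \TT\cup\{\II\}$, and your argument that every neighbourhood of $\II$ meets $\TT$ (finitely many $B_j^-$ against infinitely many atoms) is precisely the paper's final step. The only difference is presentational: you run the separation pointwise over $x\in\overline{\TT}$, whereas the paper packages the same three constraints into a single explicit closed superset of $\TT$.
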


\begin{proof} Let $\A \subset \PP(X)$ so that $\emptyset \not \in \A$, $\tau_A = \{X, \emptyset, A\}$ and $T_{\A} = \{\tau_A \mid A \in \A\}$. Consider the following closed set

$$
\C = X^+ \cap \emptyset^+ \cap \left(\bigcap_{D \in \PP(X) \setminus \A} D^- \right) \cap \left(\bigcap_{B, C \in \A} (B^- \cup C^-) \right)
$$\\

where, of course, $B \not = C$. Then $T_\A \subseteq \C$ and $\II \in \C$. Any family that contains any element from $\PP(X) \setminus \A$ can't belong to $\C$ and any family (and topology) that contains elements from $\A$ can contain at most one. Thus $T_\A \cup\{\II\} = \C$. Finally, any neighbourhood of $\II$ must intersect $T_{\A}$ and the result follows.

\end{proof}

\begin{corollary} Let $\TT$ be any infinite collection of atoms in $Top(X)$. Then $\TT \cup \{\II\}$ is the one-point compactification of $\TT$ in $2^{\PP(X)}$.

\end{corollary}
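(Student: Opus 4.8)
The plan is to verify directly that the subspace topology which $\TT \cup \{\II\}$ inherits from $\px$ coincides, point for point, with the Alexandroff topology of the one-point compactification of $\TT$ regarded as a discrete space. By the preceding theorem we already have $\overline{\TT} = \TT \cup \{\II\}$, so $\TT \cup \{\II\}$ is a closed subset of the compact Hausdorff space $\px$; it is therefore itself compact Hausdorff, contains $\TT$ as a dense subspace, and has the single-point remainder $\{\II\}$. Recalling that for a locally compact Hausdorff space $Y$ the Alexandroff compactification is characterised by adjoining one point whose neighbourhoods are exactly the complements of the compact subsets of $Y$, it remains only to pin down the local topology at the atoms and at $\II$.

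First I would show that $\TT$ is discrete. The essential computation is that for an atom $\tau_A = \{X, \emptyset, A\}$ with $A$ nonempty and proper, and any distinct atom $\tau_B$ with $B \neq A$, we have $A \notin \tau_B$: indeed $A \in \{X,\emptyset,B\}$ would force $A = B$ since $A \neq X$ and $A \neq \emptyset$. Hence $A^+ \cap \TT = \{\tau_A\}$, so every atom is isolated in $\TT$ and $\TT$ is a discrete subspace of $\px$. In particular, since compact subsets of a discrete space are precisely its finite subsets, the compact subsets of $\TT$ are exactly its finite subsets.

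Next I would identify the neighbourhood filter of $\II$. As $\px$ is Hausdorff, $\{\II\}$ is closed, so $\TT = (\TT \cup \{\II\}) \setminus \{\II\}$ is open and, by the previous paragraph, consists of isolated points. For the neighbourhoods of $\II = \{X,\emptyset\}$, note that a basic open box containing $\II$ has the form $X^+ \cap \emptyset^+ \cap \bigcap_{j=1}^{n} B_j^-$ with each $B_j \notin \{X,\emptyset\}$; an atom $\tau_A$ lies in this box iff $B_j \notin \tau_A$ for every $j$, which by the computation above means $A \notin \{B_1,\dots,B_n\}$. Thus the trace of any basic neighbourhood of $\II$ on $\TT$ omits only the finitely many atoms $\tau_A \in \TT$ with $A \in \{B_1,\dots,B_n\}$, so every neighbourhood of $\II$ is cofinite in $\TT$. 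Conversely, given any finite family of atoms $\{\tau_{B_1},\dots,\tau_{B_n}\} \subseteq \TT$, the set $X^+ \cap \emptyset^+ \cap \bigcap_{j=1}^{n} B_j^-$ is an open neighbourhood of $\II$ whose trace on $\TT \cup \{\II\}$ is exactly $\{\II\} \cup (\TT \setminus \{\tau_{B_1},\dots,\tau_{B_n}\})$. Hence the neighbourhoods of $\II$ are precisely the complements of the finite, equivalently compact, subsets of $\TT$.

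Combining the two computations, the topology on $\TT \cup \{\II\}$ has all points of $\TT$ isolated together with a single remainder point $\II$ whose neighbourhoods are the complements of compact subsets of $\TT$ — which is exactly the defining description of the Alexandroff one-point compactification of the discrete space $\TT$. I do not expect a serious obstacle; the only step needing care is the bookkeeping at $\II$, namely confirming that the cofinite traces arising from basic clopen boxes exhaust precisely the Alexandroff neighbourhoods of $\II$ and produce no others, which is where the finiteness of the index set of the factors $B_j^-$ is used.
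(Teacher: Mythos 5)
Your proof is correct. The paper gives no explicit argument for this corollary: it is meant to follow at once from the preceding theorem, the implicit reasoning being that $\overline{\TT} = \TT \cup \{\II\}$ is closed in the compact Hausdorff space $\px$, hence compact Hausdorff; since $\{\II\}$ is closed, $\TT$ sits inside it as a dense open subspace with a single remainder point, and any such compactification \emph{is} the one-point compactification by the standard uniqueness property. You start from the same key fact (the theorem's computation of $\overline{\TT}$) but then replace the appeal to uniqueness by a direct verification of the Alexandroff structure: the computation $A^+ \cap \TT = \{\tau_A\}$ shows $\TT$ is discrete, so its compact subsets are exactly the finite ones, and your analysis of basic boxes about $\II$ shows the neighbourhood traces at $\II$ are precisely the cofinite subsets of $\TT$. (One cosmetic point: a basic box containing $\II$ need not literally include the factors $X^+$ and $\emptyset^+$, but adjoining them changes nothing on $\TT \cup \{\II\}$, so your canonical form is harmless.) Your route is more elementary — it uses only the definition of the Alexandroff compactification rather than its characterisation among compactifications — and it makes explicit something the abstract argument leaves invisible: that $\TT$ is a \emph{discrete} subspace. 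That is exactly the fact the paper relies on in the next corollary, where an arbitrary discrete space $Y$ with $|Y| \leq 2^{|X|}$ is identified with a family of atoms; so your extra bookkeeping is not wasted effort but supplies a detail the paper takes for granted.
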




Given two topologies in $Top(X)$ we say that they are disjoint provided their intersection is the trivial topology $\II$ on $X$.

\begin{theorem} \label{thm:fortop}
Let $\TT$ be an infinite collection of pairwise disjoint topologies on $X$. Then $\TT \cup \{\II\} = \overline{\TT}$ is the one-point compactification of $\TT$ in $2^{\PP(X)}$.
\end{theorem}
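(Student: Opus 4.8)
The plan is to mirror the argument for the preceding theorem on infinite collections of atoms, the present statement being its natural generalization: an atom $\{X,\emptyset,A\}$ is a topology, any two distinct atoms meet in $\II$, and so a family of atoms is pairwise disjoint in the present sense. The single fact driving everything is a consequence of pairwise disjointness: \emph{every subset $A$ with $A\neq X,\emptyset$ belongs to at most one member of $\TT$}. Indeed, if $A\in\tau_1\cap\tau_2$ with $\tau_1\neq\tau_2$ in $\TT$, then $A\in\tau_1\cap\tau_2=\II=\{X,\emptyset\}$, forcing $A\in\{X,\emptyset\}$. Throughout I would assume, without loss of generality, that $\II\notin\TT$, i.e. each $\tau\in\TT$ carries some nonempty proper open set (equivalently $\tau\supset\II$); otherwise $\II$ fails to be isolated and the one-point-compactification description degenerates.

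First I would record that $\TT\cup\{\II\}\subseteq\overline{\TT}$ and that each $\tau\in\TT$ is isolated in $\overline{\TT}$. The inclusion $\TT\subseteq\overline{\TT}$ is immediate, while for $\II$ a basic neighbourhood $\bigcap_i A_i^+\cap\bigcap_j B_j^-$ of $\II$ has every $A_i\in\{X,\emptyset\}$ (imposing no restriction, since every topology contains $X$ and $\emptyset$) and finitely many $B_j\neq X,\emptyset$; by the fact above each $B_j$ lies in at most one member of $\TT$, so all but finitely many $\tau\in\TT$ lie inside this neighbourhood, and as $\TT$ is infinite, $\II$ is a limit point. For isolation, pick $A\in\tau$ with $A\neq X,\emptyset$; then $A^+$ is a neighbourhood of $\tau$ meeting $\TT\cup\{\II\}$ only in $\tau$ (it misses $\II$, and meets $\TT$ precisely in the unique host of $A$).

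The core is the reverse inclusion $\overline{\TT}\subseteq\TT\cup\{\II\}$, which I expect to be the main obstacle, since one must exclude every stray point of $\px$. Fix $x\in\overline{\TT}\setminus\TT$. If $x$ contains some $A$ with $A\neq X,\emptyset$, then $A^+$ is a neighbourhood of $x$ with $|A^+\cap\TT|\leq 1$; since $\px$ is Hausdorff, intersecting $A^+$ with a neighbourhood separating $x$ from that lone possible point produces a neighbourhood of $x$ disjoint from $\TT$, contradicting $x\in\overline{\TT}$. Hence $x\subseteq\{X,\emptyset\}$. Finally, $X^-$ and $\emptyset^-$ are each disjoint from $\TT$ (every topology contains $X$ and $\emptyset$), so $X\in x$ and $\emptyset\in x$; together with $x\subseteq\{X,\emptyset\}$ this forces $x=\II$. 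One could instead exhibit an explicit closed set containing $\TT$ in the spirit of the atoms theorem, or invoke part (i) of Theorem~\ref{thm:limpoint} to constrain the form of $x$, but the limit-point computation above seems cleanest here.

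It remains to identify $\overline{\TT}=\TT\cup\{\II\}$ with the one-point compactification of the discrete space $\TT$. Compactness is free: $\overline{\TT}$ is closed in the compact Hausdorff Stone space $\px$, hence compact. By the isolation step $\TT$ is an open, dense (as $\II\in\overline{\TT}$), discrete subspace, and $\II$ is its unique non-isolated point. For the neighbourhood filter at $\II$: any neighbourhood $U$ of $\II$ has closed complement in the compact space $\overline{\TT}$, disjoint from $\II$, hence a compact subset of the discrete $\TT$, hence finite, so $U$ is cofinite; conversely every cofinite set about $\II$ is open, its complement being a finite set of isolated points. Thus the neighbourhoods of $\II$ are exactly the cofinite sets, which is precisely the defining neighbourhood structure of the point at infinity in the one-point compactification of discrete $\TT$, completing the proof.
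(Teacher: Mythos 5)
Your proof is correct, and it takes a genuinely different route from the paper's. The paper establishes $\overline{\TT}=\TT\cup\{\II\}$ by exhibiting an explicit closed set in the same style as the preceding theorem on atoms, namely
$\C = X^+\cap\emptyset^+\cap\bigl(\bigcap_{D\notin\cup\TT}D^-\bigr)\cap\bigl(\bigcap(A^-\cup B^-)\bigr)\cap\bigl(\bigcap(A^+\cup B^-)\bigr)$,
where the fourth intersection runs over nontrivial open sets $A,B$ drawn from distinct members of $\TT$ and the fifth over distinct nontrivial $A,B$ drawn from a single member; it then checks by cases that $\C=\TT\cup\{\II\}$, so that closedness of $\C$ together with the cofinite-intersection property of neighbourhoods of $\II$ finishes the argument. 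You instead extract the combinatorial content of pairwise disjointness into a standalone lemma (each nonempty proper subset of $X$ belongs to at most one member of $\TT$) and compute $\overline{\TT}$ directly: that lemma plus Hausdorffness of $\px$ lets you shrink the neighbourhood $A^+$ of a stray point to one missing $\TT$ entirely, and the leftover cases are dispatched by $X^-$ and $\emptyset^-$ being disjoint from $\TT$. Your identification of the compactification structure (closed in compact $\px$, hence compact; complements of open neighbourhoods of $\II$ are compact subsets of a discrete space, hence finite) is likewise abstract, where the paper simply notes that neighbourhoods of $\II$ meet $\TT$ cofinitely. Your route is shorter to verify, makes the role of disjointness transparent, and attends to two points the paper leaves implicit: the isolation of each $\tau\in\TT$ in $\overline{\TT}$, and the degenerate case $\II\in\TT$, where the statement read literally fails (you flag this correctly, though calling its exclusion a ``without loss of generality'' is charitable --- it is really a tacit hypothesis of the theorem). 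What the paper's construction buys in exchange is concreteness: it displays $\TT\cup\{\II\}$ outright as an intersection of canonical closed sets of $\px$ without invoking any separation argument, keeping the proof uniform with the atoms theorem that precedes it.
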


\begin{proof} In the following expression, $\sigma$ and $\rho$ denote topologies in $\TT$ while $A$ and  $B$ denote certain nonempty and proper open subsets of $X$. We claim that

$$
\C = X^+ \cap \emptyset^+ \cap \left(\bigcap_{D \not \in \cup \TT} D^- \right) \cap \left(\bigcap_{ \substack{ (A,B) \in (\sigma, \rho)  \\  \sigma \not = \rho }} (A^- \cup B^- )\right) \cap \left( \bigcap_{\substack{A \not = B \\ A, B \in \sigma }} (A^+ \cup B^-)\right)
= \TT \cup \{\II\}.
$$

Clearly $\C$ is closed and $\TT \cup \{\II\} \subseteq \C$. Let $x \in \px \smallsetminus \TT$ such that $\II \subset x$. If $x \not \subseteq \cup \TT$ then there must exist $O \in x$ so that $O \not \in \cup \TT$; since $x \not \in O^-$ then $x \not \in \C$. If $x \subseteq \cup \TT$ then it is either contained in a topology from $\TT$ or not. In the former case, take $\rho \in \TT$ where $x \subset \rho$, $U \in \rho \setminus x$ and $V \in \rho \cap x$ where $V$ is neither empty nor $X$. Then $x$ fails to belong to $U^+ \cup V^-$. Otherwise we are guaranteed a pair of distinct topologies, $\rho$ and $\sigma$, in $\TT$ for which there exists $V \in x \cap \rho$ and $U \in x \cap \sigma$ and neither open set is equal to $X$ or $\emptyset$. Hence, $x \not \in V^- \cup U^-$ and we have that $\C = \TT \cup \{\II\}$. Finally, any neighbourhood of $\II$ must have a cofinite intersection with $\TT$ and the result follows.

\end{proof}

\begin{corollary} For any (infinite) discrete space $Y$, where $|Y| \leq 2^{|X|}$, $Top(X)$ contains a copy of its one-point compactification.
\end{corollary}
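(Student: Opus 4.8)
The plan is to realize the sought copy as $\TT \cup \{\II\}$ for a carefully chosen infinite family $\TT$ of pairwise disjoint topologies, and then to quote Theorem~\ref{thm:fortop}, which already identifies $\TT \cup \{\II\} = \overline{\TT}$ with the one-point compactification of $\TT$ in $\px$. The guiding observation is that the one-point compactification of a discrete space is determined up to homeomorphism by its cardinality alone; so it will suffice to exhibit, inside $Top(X)$, a family $\TT$ of pairwise disjoint topologies that is discrete as a subspace and has $|\TT| = |Y|$.

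First I would manufacture the required supply of pairwise disjoint topologies from atoms. For a nonempty proper subset $A \subset X$ write $\tau_A = \{\emptyset, A, X\}$; whenever $A \neq B$ one has $\tau_A \cap \tau_B = \{\emptyset, X\} = \II$, so any family of distinct atoms is automatically pairwise disjoint in the sense required by Theorem~\ref{thm:fortop}. Since $X$ is infinite there are exactly $2^{|X|}$ nonempty proper subsets of $X$, hence $2^{|X|}$ pairwise distinct atoms are available.

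Next, using the hypothesis $|Y| \leq 2^{|X|}$ together with the fact that $Y$ is infinite, I would select a subfamily $\A \subseteq \PP(X) \setminus \{\emptyset, X\}$ with $|\A| = |Y|$ and put $\TT = \{\tau_A \mid A \in \A\}$. This is an infinite collection of pairwise disjoint topologies with $|\TT| = |Y|$, so Theorem~\ref{thm:fortop} applies and gives $\overline{\TT} = \TT \cup \{\II\}$ as the one-point compactification of $\TT$; in particular every $\tau_A$ is isolated and $\TT$ is a discrete subspace of $\px$ whose unique limit point is $\II$.

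Finally, since $\TT$ is a discrete space of cardinality $|Y|$, any bijection $\TT \to Y$ is a homeomorphism of discrete spaces, and extending it by $\II \mapsto \infty$ yields a bijection $\TT \cup \{\II\} \to Y \cup \{\infty\}$ which is a homeomorphism: neighbourhoods of $\II$ meet $\TT$ cofinitely (as recorded in the proof of Theorem~\ref{thm:fortop}), matching exactly the cofinite neighbourhoods of the point at infinity in the one-point compactification of $Y$. Thus $Top(X)$ contains a copy of the one-point compactification of $Y$. Since the substantive work is carried out by Theorem~\ref{thm:fortop}, no serious obstacle remains; the only genuine points to check are the cardinality count of the atoms and the verification that the discrete structure of $\TT$ and the cofinite neighbourhood basis at $\II$ transfer correctly to $Y \cup \{\infty\}$.
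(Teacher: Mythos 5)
Your proposal is correct and takes essentially the same approach as the paper, whose one-line proof likewise selects pairwise disjoint atoms (of which there are $2^{|X|}$) and invokes Theorem~\ref{thm:fortop}. One small point of precision: the discreteness of $\TT$ is not literally part of the statement of Theorem~\ref{thm:fortop} (a one-point compactification need not have discrete remainder-complement), but it is immediate from your own construction, since $A^+ \cap \TT = \{\tau_A\}$ for distinct atoms, so nothing is lost.
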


\begin{proof} There are $2^{|X|}$ atoms in $Top(X)$ and all are disjoint from each other.

\end{proof}

\section{$\beta \mathbb{N}$ in $Top(\mathbb{N})$}

An \emph{ultratopology} $\TT$ on a set $X$ is of the form $\TT = \PP(X \smallsetminus \{x\}) \cup \U$, where $\U$ is an ultrafilter on $X$ and $\{x\} \not \in \U$. We shall use the notation $\TT_\U$ for such an ultratopology when we wish to identify the associated ultrafilter $\U$. We denote by $Ult(X)$ the set of all ultratopologies on $X$. Whenever $\U$ is a principal (non-principal) ultrafilter, $\TT$ is called a principal (non-principal) ultratopology. Denote by TYPE($x$) the set $  \{ \F \mid \F$ is an ultrafilter and $\{x\} \not \in \F\}$ and by TYPE[$x$] the set $\{ \TT \in Top(X) \mid \TT$ is an ultratopology and $\{x\} \not \in \TT\}$. Note that $\{$TYPE[$x$]$: x \in X\}$ is a partition of $Ult(X)$.

Given $X$, define $\U_X$ to be the set of all ultrafilters on $X$ and for any $\F \in \U_X$ and $A \subset X$ let
$$\F_A = \F \upharpoonright (X \smallsetminus A) = \{ N \cap (X \smallsetminus A) \mid N \in \F \}.$$

In other words, $\F_A$ is the \textit{trace} of $\F$ on $X \smallsetminus A$. Whenever $A = \{a\}$ we let $\F_{\{a\}} = \F_a$.

\begin{lemma} \label{lem:3part} Let $n \in \N$ and denote $\N' = \N  \smallsetminus \{n\}$ then

\begin{enumerate}[(i)]
\item $\forall \F \in$ TYPE($n$), $\F = \F_n \cup \{M \cup \{n\} \mid M \in \F_n\}$,
\item $\forall \F \in$ TYPE($n$), $\F_n \in \U_{\N'}$ and
\item $\F, \G \in$ TYPE($n$) so that $\G \not = \F$ implies that $\F_n \not = \G_n$.
\end{enumerate}
\end{lemma}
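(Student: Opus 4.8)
The plan is to build everything on a single structural observation about ultrafilters and then read off the three parts. First I would note that $\F \in$ TYPE($n$) means precisely $\{n\} \notin \F$, so the ultrafilter dichotomy forces the complement $\N' = \N \smallsetminus \{n\}$ into $\F$. With $\N' \in \F$ in hand, I would establish the convenient reformulation
$$ \F_n = \{ A \in \F \mid n \notin A \} = \F \cap \PP(\N'). $$
The inclusion $\supseteq$ is immediate, while $\subseteq$ uses that any $N \cap \N'$ with $N \in \F$ lies in $\F$ (closure under finite meets, since $\N' \in \F$) and omits $n$. This identifies $\F_n$ with exactly those members of $\F$ that avoid $n$, and it is the engine for all three claims.

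For (i) I would argue by a two-case split on an arbitrary $A \in \F$. If $n \notin A$, then $A \in \F_n$ by the reformulation, placing $A$ in the first summand. If $n \in A$, set $M = A \smallsetminus \{n\} = A \cap \N'$; then $M \in \F$ (again using $\N' \in \F$) and $n \notin M$, so $M \in \F_n$ and $A = M \cup \{n\}$ lies in the second summand. Conversely, every element of $\F_n$ lies in $\F$ by the reformulation, and every $M \cup \{n\}$ with $M \in \F_n$ lies in $\F$ by upward closure; this gives the reverse inclusion and hence the displayed equality.

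Part (ii) is the standard fact that the trace of an ultrafilter on a member of that ultrafilter is again an ultrafilter; since $\N' \in \F$, I would verify it directly from $\F_n = \F \cap \PP(\N')$. Nonemptiness fails only at $\emptyset \notin \F$, we have $\N' \in \F_n$, and both upward closure within $\PP(\N')$ and closure under finite intersection are inherited from $\F$. For the ultra condition one takes $A \subseteq \N'$, applies the dichotomy in $\F$ between $A$ and $\N \smallsetminus A$, and intersects the latter with $\N'$ to land $\N' \smallsetminus A$ in $\F_n$. Finally, (iii) is immediate from (i): the formula there reconstructs $\F$ verbatim from $\F_n$, so $\F_n = \G_n$ would force $\F = \G$, whence contrapositively $\F \neq \G$ yields $\F_n \neq \G_n$.

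There is no genuine obstacle here; the only point requiring care is the reformulation $\F_n = \F \cap \PP(\N')$, since the defining expression $\F_n = \{ N \cap \N' \mid N \in \F \}$ does not on its face guarantee that the traced sets remain members of $\F$. Once $\N' \in \F$ is used to close that gap, parts (i)--(iii) follow by routine ultrafilter bookkeeping, with (iii) a one-line consequence of (i).
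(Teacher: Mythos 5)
Your proposal is correct and follows essentially the same route as the paper: everything hinges on showing $\F_n \subseteq \F$ using that $\{n\} \notin \F$ forces membership of complements, with (ii) verified by routine filter bookkeeping and (iii) read off immediately from (i). The only difference is cosmetic: you apply the ultrafilter dichotomy once to get $\N' \in \F$ and then deduce $\F_n = \F \cap \PP(\N')$ via closure under intersection, whereas the paper applies the dichotomy separately to $(\N' \smallsetminus A) \cup \{n\}$ for each $A \in \F_n$ and takes complements -- your version is marginally cleaner but it is the same idea.
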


\begin{proof}

\begin{enumerate}[(i)]
\item We must only show that $\F_n \subset \F$. Indeed, if $\F_n \subset \F$ then given any $A \in \F_n$ we have that $A \cup \{n\} \in \F$ since $\F$ is a filter. So let $A \in \F_n$ and notice that either $A \in \F$ or $A \cup \{n\} \in \F$. Since the former case is trivial assume that $A \cup \{n\} \in \F$. Notice that since $\F \in$ TYPE($n$) then $(\N' \smallsetminus A) \cup \{n\} \not \in \F$ and thus $A =\N \smallsetminus ((\N' \smallsetminus A) \cup \{n\}) \in \F$.

\item Take $A \in \F_n$. If $A \subset B \subseteq \N'$, then $B \in \F$ and consequently $B \in \F_n$. Next, let $A,B \in \F_n$ and notice that $A,B \in F$ and $A \cap B \in \F$. Thus, $A \cap B \in \F_n$. Lastly, let $A \subset \N'$ and notice that either $A$ or its complement in $\N$ belong to $\F$. In the former case we are done so assume $\N \smallsetminus A \in \F$. To this end, we must only notice that $\N' \smallsetminus A = (\N \smallsetminus A) \cap \N' \in \F_n$.
\item This follows directly from (i).
\end{enumerate}

\end{proof}

\begin{theorem} For any $n \in \N$ the mapping $\mathfrak{F}:$ TYPE($n$)$\rightarrow \U_{\N'}$ for which $\F \mapsto \F_n$ is a bijection.

\end{theorem}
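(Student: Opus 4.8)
The plan is to read off most of the statement directly from Lemma~\ref{lem:3part}. Part (ii) asserts that $\F_n \in \U_{\N'}$ for every $\F \in$ TYPE($n$), so $\mathfrak{F}$ is well defined as a map into $\U_{\N'}$; part (iii) asserts that distinct members of TYPE($n$) have distinct traces, so $\mathfrak{F}$ is injective. Consequently the entire burden of the proof falls on establishing surjectivity: given an arbitrary $\G \in \U_{\N'}$, I must exhibit some $\F \in$ TYPE($n$) with $\F_n = \G$.

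For surjectivity the candidate preimage is dictated by part (i) of the lemma, which prescribes exactly how the ultrafilter of an ultratopology is rebuilt from its trace. Accordingly I would set
$$\F = \{ A \subseteq \N \mid A \cap \N' \in \G \},$$
and first note that this coincides with $\G \cup \{ M \cup \{n\} \mid M \in \G \}$: a set $A$ lies in the latter collection precisely when $A \cap \N' \in \G$, according to whether or not $n \in A$. This description of $\F$ via intersection with $\N'$ is the convenient one for checking the filter axioms.

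The verification then proceeds by pulling each ultrafilter axiom back through the map $A \mapsto A \cap \N'$. Emptiness fails since $\emptyset \cap \N' = \emptyset \notin \G$; upward closure and closure under finite intersection follow because $A \cap \N' \subseteq B \cap \N'$ whenever $A \subseteq B$, and $(A \cap B) \cap \N' = (A \cap \N') \cap (B \cap \N')$. The maximality property rests on the single identity $\N' \smallsetminus (A \cap \N') = (\N \smallsetminus A) \cap \N'$: since $\G$ decides $A \cap \N'$, it thereby decides either $A$ or $\N \smallsetminus A$ for $\F$. Finally $\{n\} \cap \N' = \emptyset \notin \G$ gives $\{n\} \notin \F$, so $\F \in$ TYPE($n$), and a direct double inclusion shows $\F_n = \G$, since each $M \in \G$ satisfies $M = M \cap \N' \in \F_n$ and conversely every trace element lies in $\G$ by construction.

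I do not anticipate a genuine obstacle here: once the correct preimage is written down, every step is a routine transfer of a filter property across $A \mapsto A \cap \N'$, the only point deserving a moment of care being the maximality check, where the complementation identity above is precisely what converts the decision of $\G$ on $A \cap \N'$ into a decision of $\F$ on $A$.
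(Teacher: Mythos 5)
Your proposal is correct and follows essentially the same route as the paper: well-definedness from Lemma~\ref{lem:3part}(ii), injectivity from (iii), and surjectivity via the preimage $\G \cup \{M \cup \{n\} \mid M \in \G\}$, which is exactly the paper's $\F_{\HH}$. The only difference is that the paper dismisses the verification that this preimage is an ultrafilter in TYPE($n$) with trace $\G$ as ``easy to see,'' whereas you carry it out in full via the map $A \mapsto A \cap \N'$.
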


\begin{proof} Lemma~\ref{lem:3part} part (ii) tells us that the range is well-defined. Also, for any filter $\HH \in \U_{\N'}$ it is easy to see that $\F_{\HH} = \HH \cup \{M \cup \{n\} \mid M \in \HH\} \in$ TYPE($n$) and that $\F_{\HH} \upharpoonright \N' = \HH$. Lastly, from part (iii) of Lemma~\ref{lem:3part} we get injectivity.

\end{proof}

\begin{theorem} \label{thm:homeo} For any $n \in \N$, TYPE[$n$] is homeomorphic to $\beta \N$.

\end{theorem}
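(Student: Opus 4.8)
The plan is to exhibit an explicit homeomorphism from TYPE[$n$] onto the Stone space $\U_{\N'}$ of all ultrafilters on the countable set $\N' = \N \smallsetminus \{n\}$; since $\N'$ is countably infinite, this Stone space is a copy of $\beta\N$, so identifying the two spaces establishes the theorem. First I would pin down the underlying bijection. Every ultratopology $\TT \in$ TYPE[$n$] is necessarily ``punctured'' at $n$: if $\TT = \PP(\N\smallsetminus\{x\}) \cup \U$ with $x \neq n$, then $\{n\} \subseteq \N \smallsetminus \{x\}$ forces $\{n\} \in \TT$, contradicting membership in TYPE[$n$]. Hence $\TT = \PP(\N') \cup \U$ with $\U \in$ TYPE($n$), and $\TT \mapsto \U$ is a bijection of TYPE[$n$] onto TYPE($n$). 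Composing with the bijection $\mathfrak{F}:\U \mapsto \U_n$ of the preceding theorem yields a bijection
\[
\Phi : \text{TYPE}[n] \longrightarrow \U_{\N'}, \qquad \TT = \PP(\N') \cup \U \longmapsto \U_n .
\]

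Next I would record the two topologies in subbasic form. The subspace topology on TYPE[$n$] inherited from $2^{\PP(\N)}$ is generated by the traces $A^+ \cap \text{TYPE}[n]$ and $A^- \cap \text{TYPE}[n]$ for $A \subseteq \N$, while the Stone topology on $\U_{\N'}$ has as a basis the clopen sets $\widehat{B} = \{\HH \in \U_{\N'} \mid B \in \HH\}$ for $B \subseteq \N'$. The crux is to match these families under $\Phi$, splitting on whether $n \in A$. If $n \notin A$ then $A \in \PP(\N') \subseteq \TT$ for every $\TT \in$ TYPE[$n$], so $A^+ \cap \text{TYPE}[n] = \text{TYPE}[n]$ and $A^- \cap \text{TYPE}[n] = \emptyset$ are trivial. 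If $n \in A$, write $A = A' \cup \{n\}$ with $A' = A \cap \N'$. By Lemma~\ref{lem:3part}(i) the only members of $\U$ containing $n$ are the sets $M \cup \{n\}$ with $M \in \U_n$, so $A \in \U$ if and only if $A' \in \U_n$; and since $n \in A$ gives $A \notin \PP(\N')$, we have $A \in \TT$ iff $A \in \U$. This yields
\[
\Phi\bigl(A^+ \cap \text{TYPE}[n]\bigr) = \widehat{A'} \qquad (n \in A),
\]
and dually $\Phi(A^- \cap \text{TYPE}[n]) = \U_{\N'} \smallsetminus \widehat{A'}$, again clopen. Conversely, any basic clopen $\widehat{B}$ with $B \subseteq \N'$ equals $\Phi\bigl((B \cup \{n\})^+ \cap \text{TYPE}[n]\bigr)$. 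Thus $\Phi$ sends every subbasic open set to a clopen set and pulls every basic clopen set back to a subbasic open set; being a bijection, $\Phi$ is simultaneously open and continuous, hence a homeomorphism.

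The main obstacle is precisely that middle step: translating the membership relation ``$A \in \U$'' into ``$A \cap \N' \in \U_n$'', which is exactly where the structural decomposition of Lemma~\ref{lem:3part}(i) carries the argument; the rest is bookkeeping about which traces of $A^{+}$ and $A^{-}$ are nontrivial. Once the clopen correspondence is in hand, no separate compactness argument is required, though one could alternatively note that TYPE[$n$] is Hausdorff as a subspace of $2^{\PP(\N)}$ and that $\Phi^{-1}$ is then a continuous bijection out of the compact space $\U_{\N'} \cong \beta\N$.
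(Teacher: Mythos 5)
Your proposal is correct and takes essentially the same approach as the paper: the underlying map is the same trace bijection $\TT_{\F} \mapsto \F_n$ onto $\U_{\N'}$, with the topologies matched by identifying $(A \cup \{n\})^+ \cap \mbox{TYPE}[n]$ with the Stone-space basic clopen set determined by $A \subseteq \N'$. You are somewhat more explicit than the paper (in verifying that every member of TYPE[$n$] must be punctured exactly at $n$, and in handling the $A^-$ traces), but these are refinements of the same argument rather than a different route.
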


\begin{proof} Since TYPE($n$) can be bijected with $\U_{\N'}$ (by $\F \mapsto \F_n$) then the same is true of TYPE[$n$]. That is, for any $\TT_{\F} \in$ TYPE[$n$] we canonically map $\TT_{\F} \mapsto \F_n$ so that $\TT_{\F} = \PP(\N')  \cup \F$. Recall that a subbase for $\beta \N'$ is comprised of sets of the form $A' = \{\F \in \U_{\N'} \mid A \in \F\}$ for all $A \in \PP(\N')$. We claim that for any $A \subseteq \N'$, $A' \mapsto (A \cup \{n\})^+ \cap$ TYPE[$n$]. Indeed, if $A \in \F_n$ for some $\F \in$ TYPE($n$) then by Lemma~\ref{lem:3part} $A \cup \{n\} \in \F$ and $\TT_{\F} \in (A \cup \{n\})^+ \cap $ TYPE[$n$].

Similarly,  for any $A \in \PP(\N')$, $A^+ \cap$ TYPE[$n$]$ = $ TYPE[$n$] which bijects to $\U_{\N'}$. If $A \subseteq \N$ with $n \in A$ then for any ultratopology $\TT_{\F}$ in $A^+ \cap$ TYPE[$n$] it must be the case that $A \smallsetminus \{n\} \in \F_n$. Consequently, $A^+ \cap$ TYPE[$n$] $\mapsto (A \smallsetminus \{n\})'$.

\end{proof}

In a nutshell, we have the following diagram of the above claim:\\

\begin{displaymath}
\xymatrix{
&\beta\N' \ar[d] \ar@{.>}[drrrr]^{\F_n \mapsto \TT_{\F}}\\
& \U_{\N'} \ar[u]  \ar[rr]_{\F_n \mapsto \F}
&
&\mbox{TYPE}(n) \ar[rr]_{\F \mapsto \TT_{\F}} \ar[ll]
&
&\mbox{TYPE}[n] \ar[ll] \ar@{.>}[ullll]
}
\end{displaymath}\\

\begin{corollary} The $F_{\sigma}$ set $Ult(\N) = \bigcup_{n \in \N} \mbox{TYPE[}n\mbox{]}$ is not compact.

\end{corollary}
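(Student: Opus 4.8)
The plan is to split the corollary into its two assertions and to observe that both follow quickly from Theorem~\ref{thm:homeo} together with the partition structure of $Ult(\N)$. For the $F_{\sigma}$ claim, I would note that each $\mbox{TYPE[}n\mbox{]}$ is homeomorphic to $\beta\N$ by Theorem~\ref{thm:homeo}, hence compact, and therefore closed in the Hausdorff space $2^{\PP(\N)}$; since $Ult(\N) = \bigcup_{n \in \N} \mbox{TYPE[}n\mbox{]}$ is a countable union of closed sets, it is $F_{\sigma}$. For non-compactness, the key idea is to exhibit $\{\mbox{TYPE[}n\mbox{]} : n \in \N\}$ as an open cover of $Ult(\N)$ that admits no finite subcover.

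The crux is therefore to show that each $\mbox{TYPE[}n\mbox{]}$ is \emph{open} in the subspace $Ult(\N)$ (we already have that it is closed). First I would identify $\mbox{TYPE[}n\mbox{]}$ as the trace on $Ult(\N)$ of a single subbasic open set of $2^{\PP(\N)}$, namely $\{n\}^-$. Indeed, an arbitrary element of $Ult(\N)$ is an ultratopology $\TT_{\U} = \PP(\N \smallsetminus \{x\}) \cup \U$ whose unique non-isolated point is $x$; such a topology omits $\{n\}$ precisely when $x = n$, i.e. precisely when it lies in $\mbox{TYPE[}n\mbox{]}$. Hence $\{n\}^- \cap Ult(\N) = \mbox{TYPE[}n\mbox{]}$, so each piece is clopen in $Ult(\N)$.

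It then remains to assemble the contradiction. Since $\{\mbox{TYPE[}x\mbox{]} : x \in \N\}$ is a partition of $Ult(\N)$ into nonempty (each contains, for instance, a principal ultratopology), pairwise disjoint, clopen sets, the family $\{\mbox{TYPE[}n\mbox{]}\}_{n \in \N}$ is an open cover of $Ult(\N)$; any finite subfamily $\mbox{TYPE[}n_1\mbox{]}, \ldots, \mbox{TYPE[}n_k\mbox{]}$ covers only those pieces and misses every $\mbox{TYPE[}m\mbox{]}$ with $m \notin \{n_1, \ldots, n_k\}$, so no finite subcover exists and $Ult(\N)$ is not compact.

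I expect the only genuinely substantive step to be the openness computation $\{n\}^- \cap Ult(\N) = \mbox{TYPE[}n\mbox{]}$; once this is in hand, the rest is the standard observation that a space partitioned into infinitely many nonempty clopen sets cannot be compact. A minor point to verify carefully is that the containment $\{n\}^- \cap Ult(\N) \subseteq \mbox{TYPE[}n\mbox{]}$ relies on membership in $Ult(\N)$ already forcing the ambient family to be an ultratopology, so that ``omitting $\{n\}$'' pins down the non-isolated point as exactly $n$.
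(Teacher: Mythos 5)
Your proof is correct and follows essentially the same route as the paper: the paper's argument is precisely the observation that $\{n\}^- \cap Ult(\N) = \mbox{TYPE[}n\mbox{]}$, so that $\{\{n\}^-\}_{n \in \N}$ is an open cover of $Ult(\N)$ admitting no finite subcover. Your additional justification of the $F_{\sigma}$ claim (each $\mbox{TYPE[}n\mbox{]}$ is compact by Theorem~\ref{thm:homeo}, hence closed in the Hausdorff space $2^{\PP(\N)}$) is a sound way to make explicit a point the paper leaves unargued.
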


\begin{proof} Consider the following open cover of $Ult(\N)$:

$$\bigcup_{n \in \N} \{n\}^-.$$

Note that $\forall n \in \N$, $\{n\}^- \cap Ult(\N) = $ TYPE[$n$] and so no finite subcollection of the above cover can cover $Ult(\N)$.

\end{proof}

In particular, the discrete topology on $\N$ is a limit point of $ Ult(\N) $. Lemma~\ref{lem:3part} and Theorem~\ref{thm:homeo} can be extended to any infinite set $X$. That said, for $|X| > \aleph_0$, $Ult(X)$ is not an $F_{\sigma}$ set.

\begin{theorem} For $Y$ a discrete space with $|Y| \leq |X|$, $Top(X)$ contains a copy of $\beta Y$.
\end{theorem}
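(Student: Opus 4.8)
The plan is to realize $\beta Y$ as a closed subspace of one of the copies of a Stone--\v{C}ech compactification already produced in Theorem~\ref{thm:homeo}, rather than to construct the embedding from scratch. Write $|Y| = \kappa \le |X|$. Since $Y$ is discrete, $\beta Y$ depends on $Y$ only through $\kappa$, so it suffices to exhibit a copy of $\beta Y_0$ for some $Y_0 \subseteq X$ with $|Y_0| = \kappa$. The case $\kappa$ finite is immediate, since then $\beta Y = Y$ is finite discrete and any $\kappa$ distinct points of the Hausdorff space $\px$ form such a copy; so I would assume $\kappa$ is infinite.

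First I would fix the ambient data. Choose $x_0 \in X$ and a subset $Y_0 \subseteq X \smallsetminus \{x_0\}$ with $|Y_0| = \kappa$; this is possible for every $\kappa \le |X|$ because $X$ is infinite, so $|X \smallsetminus \{x_0\}| = |X| \ge \kappa$ and a subset of the required cardinality exists (if $\kappa = |X|$ one may simply take $Y_0 = X \smallsetminus \{x_0\}$). Set $X' = X \smallsetminus \{x_0\}$, a discrete space of cardinality $|X|$. Next I would invoke the extension of Lemma~\ref{lem:3part} and Theorem~\ref{thm:homeo} to the infinite set $X$ (as noted in the remark following the corollary on $Ult(\N)$): the subspace $\mbox{TYPE}[x_0]$ of $Top(X)$ is homeomorphic to $\beta X' = \U_{X'}$, via $\TT_{\F} \mapsto \F_{x_0}$.

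The key classical fact I would then cite is that for a discrete space $X'$ and any $Y_0 \subseteq X'$, the set $\mathrm{cl}_{\beta X'}(Y_0) = \{\F \in \U_{X'} \mid Y_0 \in \F\}$ is a clopen, hence compact, subspace of $\beta X'$ that is homeomorphic to $\beta Y_0$; since $|Y_0| = \kappa = |Y|$ this yields $\mathrm{cl}_{\beta X'}(Y_0) \cong \beta Y$. Transporting this closed subspace back through the homeomorphism of Theorem~\ref{thm:homeo} produces the set $\{\TT_{\F} \in \mbox{TYPE}[x_0] \mid Y_0 \in \F\}$, a compact subspace of $Top(X)$ homeomorphic to $\beta Y$. (Here I would use that, since $Y_0 \subseteq X'$, Lemma~\ref{lem:3part}(i) gives $Y_0 \in \F \iff Y_0 \in \F_{x_0}$, so this set is exactly the preimage of $\mathrm{cl}_{\beta X'}(Y_0)$.)

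I expect the genuine content to be almost entirely bookkeeping: verifying the cardinality selection of $x_0$ and $Y_0$, and confirming that the chosen family of ultratopologies is precisely the preimage of $\mathrm{cl}_{\beta X'}(Y_0)$, so that the restriction of the Theorem~\ref{thm:homeo} homeomorphism is a homeomorphism onto it. The main obstacle, such as it is, will be making the identification $\mathrm{cl}_{\beta X'}(Y_0) \cong \beta Y$ precise and checking that it is carried to an honest topological copy inside $Top(X)$ rather than merely a continuous image; this follows automatically once one has the homeomorphism of Theorem~\ref{thm:homeo}, because a continuous bijection from a compact space onto a Hausdorff subspace is a homeomorphism and closed subspaces of $\beta X'$ are compact.
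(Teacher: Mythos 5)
Your proof is correct, but it takes a genuinely different route from the paper's. The paper splits into two cases: for $|Y| = |X|$ it declares the result trivial (implicitly, via the asserted extension of Theorem~\ref{thm:homeo}, $\mbox{TYPE}[x]$ is already a copy of $\beta(X \smallsetminus \{x\}) \cong \beta Y$), and for $|Y| < |X|$ it first finds a copy of $\beta Y$ inside $Top(Y)$ and then transports it into $Top(X)$ by pushing topologies forward along an injection $i : Y \rightarrow X$, namely $\rho \mapsto \rho_X = \{ A \subset X \mid i^{-1}(A) \in \rho\} \cup \{X\}$. You instead never leave $Top(X)$: you take a single type $\mbox{TYPE}[x_0] \cong \beta X'$ (again via the extension of Theorem~\ref{thm:homeo} to arbitrary infinite sets) and locate $\beta Y$ inside it as the clopen subset $\{\F \mid Y_0 \in \F\} = \mathrm{cl}_{\beta X'}(Y_0)$, invoking the classical fact that the closure in $\beta X'$ of a subset $Y_0$ of the discrete space $X'$ is itself a copy of $\beta Y_0$. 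Your bookkeeping is sound: the trace identity $Y_0 \in \F \iff Y_0 \in \F_{x_0}$ does follow from Lemma~\ref{lem:3part}(i), and restricting the homeomorphism of Theorem~\ref{thm:homeo} to a closed (compact) subset lands on an honest copy by the compact-to-Hausdorff argument. The trade-off between the two proofs is worth noting. The paper's pushforward $\rho \mapsto \rho_X$ is a reusable transfer device from $Top(Y)$ into $Top(X)$, and in the case where $Y$ is countable but $X$ is uncountable it needs only Theorem~\ref{thm:homeo} exactly as proven for $\N$; your argument leans on standard Stone--\v{C}ech theory for discrete spaces but requires the extension of Theorem~\ref{thm:homeo} to uncountable $X$ whenever $X$ is uncountable, even for countable $Y$. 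Since that extension is asserted but not proven in the paper, and since both proofs need it anyway in the case $|Y| = |X| > \aleph_0$, neither route is strictly more self-contained; yours has the merit of exhibiting the copy of $\beta Y$ concretely as a family of ultratopologies $\{\TT_{\F} \in \mbox{TYPE}[x_0] \mid Y_0 \in \F\}$ rather than as an image under a constructed embedding.
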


\begin{proof}The proof is trivial for $|Y| = |X|$. Otherwise, take a copy of $\beta Y$ within $Top(Y)$ and an injection $i : Y \rightarrow X$. Then $\forall \rho \in Top(Y)$, $\rho_X = \{ A \subset X \mid i^{-1}(A) \in \rho\} \cup \{X\}$ is a topology on $X$. Moreover, $\{\rho_X \mid \rho \in \beta Y \}$ is a homeomorphic copy of $\beta Y$ in $Top(X)$.

\end{proof}

\bibliographystyle{abbrv}
\bibliography{mybib2}

\end{document}